\newtheorem{theorem}{Theorem}[section]
\newtheorem{lemma}[theorem]{Lemma}
\newtheorem{remark}[theorem]{Remark}
\def\cC{\mathcal C}
\def\cH{\mathcal H}
\newcommand{\fqs}{\mathbb {F}_{q^6}}
\def\Aut{\mbox{\rm Aut}}
\def\PG{{\rm{PG}}}
\def\ord{\mbox{\rm ord}}
\def\Aut{\mbox{\rm Aut}}
\newcommand{\PSL}{\mbox{\rm PSL}}
\newcommand{\PGL}{\mbox{\rm PGL}}
\newcommand{\PSU}{\mbox{\rm PSU}}
\newcommand{\PGU}{\mbox{\rm PGU}}
\newcommand{\aut}{\mbox{\rm Aut}}
\title{An $\mathbb{F}_{p^2}$-maximal Wiman's sextic and its automorphisms}
\date{}
\author{Massimo Giulietti, Motoko Kawakita, Stefano Lia and Maria Montanucci}
\begin{document}
\maketitle

{\bf Keywords:} Hermitian curve, Unitary groups, Quotient curves, Maximal curves, Wiman's sextic

{\bf 2000 MSC:} 11G20, 14H37 

\begin{abstract}
In 1895 Wiman introduced a Riemann surface $\mathcal{W}$  of genus $6$ over the complex field $\mathbb{C}$ defined by the homogeneous equation 
$\mathcal{W}:X^6+Y^6+Z^6+(X^2+Y^2+Z^2)(X^4+Y^4+Z^4)-12X^2 Y^2 Z^2=0$, and showed that  its full automorphism group is isomorphic to the symmetric group $S_5$. 
In \cite{kawakita} the curve $\mathcal{W}$ was studied as a curve defined over a finite field $\mathbb{F}_{p^2}$ where $p$ is a prime, and necessary and sufficient conditions for its maximality over $\mathbb{F}_{p^2}$ were obtained.
In this paper we first show that the result of Wiman concerning the automorphism group of $\mathcal{W}$ holds also over an algebraically closed field $\mathbb{K}$ of positive characteristic $p$, provided that $p \geq 7$. For $p=2,3$ the polynomial $X^6+Y^6+Z^6+(X^2+Y^2+Z^2)(X^4+Y^4+Z^4)-12X^2 Y^2 Z^2$ is not irreducible over $\mathbb{K}$, while for $p=5$ the curve $\mathcal{W}$ is rational and $Aut(\mathcal{W}) \cong \PGL(2,\mathbb{K})$. We also show that the $\mathbb{F}_{19^2}$-maximal Wiman's sextic $\mathcal{W}$ is not Galois covered by the Hermitian curve $\cH_{19}$ over $\mathbb{F}_{19^2}$.
\end{abstract}

\thanks{}

\section{Introduction} \label{SecIntro}
 
For $q$ a prime power, let $\mathbb{F}_{q^2}$ be the finite field with $q^2$ elements and $\cC$ be a projective, absolutely irreducible, non-singular algebraic curve of genus $g$ defined over $\mathbb{F}_{q^2}$. 
The curve $\cC$ is called $\mathbb{F}_{q^2}$-maximal if the number $|\cC(\mathbb{F}_{q^2})|$ of its $\mathbb{F}_{q^2}$-rational points attains the Hasse-Weil upper bound
$$q^2+1+2gq.$$
Surveys on maximal curves are found in  \cite{FT,G,G2,GS,vdG,vdG2} and \cite[Chapter 10]{HKT}.
By a result commonly referred as the Kleiman-Serre covering result, see \cite{KS} and \cite[Proposition 6]{L}, a curve   $\cC$ defined over $\mathbb{F}_{q^2}$ which is $\mathbb{F}_{q^2}$-covered  by an $\mathbb{F}_{q^2}$-maximal curve is $\mathbb{F}_{q^2}$-maximal as well. In particular, $\mathbb{F}_{q^2}$-maximal curves can be obtained as Galois $\mathbb{F}_{q^2}$-subcovers of an $\mathbb{F}_{q^2}$-maximal curve $\cC$, that is, as quotient curves $\cC/G$ where $G$ is a finite automorphism group of $\cC$ defined over $\mathbb F_{q^2}$. Most of the known $\mathbb F_{q^2}$-maximal curves are Galois covered by the Hermitian curve $\cH_q: X^q+X=Y^{q+1}$; see e.g. \cite{GSX,CKT2,GHKT2} and the references therein.

The first example of a maximal curve which is not Galois covered by the Hermitian curve was discovered by Garcia and Stichtenoth \cite{GS3}. This curve is $\mathbb{F}_{3^6}$-maximal and it is not Galois covered by $\cH_{27}$. It is a special case of the $\mathbb{F}_{q^6}$-maximal GS curve, which was later shown not to be Galois covered by $\cH_{q^3}$ for any $q>3$, \cite{GMZ,Mak}. Giulietti and Korchm\'aros \cite{GK} provided an $\mathbb{F}_{q^6}$-maximal curve, nowadays referred to as the GK curve, which is not covered by the Hermitian curve $\cH_{q^3}$ for any $q>2$. Two generalizations of the GK curve were introduced by Garcia, G\"uneri and Stichtenoth \cite{GGS} and by Beelen and Montanucci in \cite{BM}. Both these two generalizations are $\mathbb{F}_{q^{2n}}$-maximal curves, for any $q$ and odd $n \geq 3$. Also, they are not Galois covered by the Hermitian curve $\cH_{q^n}$ for $q>2$ and $n \geq 5$, see \cite{DM,BM}; the Garcia-G\"uneri-Stichtenoth's generalization is also not Galois covered by $\cH_{2^n}$ for $q=2$, see \cite{GMZ}.
The existence of infinite families of $\mathbb F_{p^2}$-maximal curves that are not Galois covered by $\mathcal H_p$ is an interesting open problem.

In 1895 Wiman \cite{wiman} introduced a Riemann surface $\mathcal{W}$ over the complex field $\mathbb{C}$ defined by the homogeneous equation of degree $6$
$$\mathcal{W}: X^6+Y^6+Z^6+(X^2+Y^2+Z^2)(X^4+Y^4+Z^4)-12X^2 Y^2 Z^2=0,$$
whose full automorphism group is isomorphic to the symmetric group $S_5$. The Jacobian of $\mathcal{W}$ decomposes completely as the product of the Jacobian of an elliptic curve $\epsilon$ six times. This fact, together with the nice projective model of $\mathcal{W}$, stimulated the investigation of the reduction $\rm{mod}$ $p$ of the curve $\mathcal{W}$ and its properties over finite fields of characteristic $p$.
Indeed Kawakita \cite{kawakita} used the complete decomposition of the Jacobian of $\mathcal{W}$ to apply Kani-Rosen Theorem \cite{KR}, and  obtained necessary and sufficient conditions for the maximality of $\mathcal{W}$ over $\mathbb{F}_{p^2}$. In particular, $\mathcal W$ is $\mathbb F_{p^2}$-maximal for infinite primes $p$; see also Section \ref{SecPrel}.  

In this paper the result of Wiman concerning the structure of the full automorphism group of $\mathcal{W}$ is extended to any algebraically closed field of characteristic $p \geq 7$. For $p=2,3$ the homogeneous polynomial defining $\mathcal{W}$ is not irreducible, while for $p=5$ the curve $\mathcal{W}$ is rational. 

We also show that the $\mathbb{F}_{19^2}$-maximal curve $\mathcal{W}$ is not Galois covered by the Hermitian curve $\cH_{19}$ over $\mathbb{F}_{19^2}$. This makes the Wiman sextics a natural candidate to provide the first example of 
an infinite family of $\mathbb F_{p^2}$-maximal curves that are not covered by the Hermitian curve.

The paper is organized as follows. Section \ref{SecPrel} provided a collection of preliminary results on the Hermitian curve and its automorphisms, on the Wiman's sextic $\mathcal{W}$ and on automorphism groups of algebraic curves in characteristic $p>0$. 
In Section \ref{secautw1} the full automorphism group of $\mathcal{W}$ is computed over algebraically closed field of characteristic $p \geq 7$, extending the result of Wiman \cite{wiman} to the positive characteristic case. 
Finally, in Section \ref{nonga}, the $\mathbb{F}_{19^2}$-maximal Wiman's sextic $\mathcal{W}$ is shown not to be Galois covered by the Hermitian curve  $\cH_{19}$ over $\mathbb{F}_{19^2}$.

\section{Preliminary results}\label{SecPrel}

\subsection{The Wiman's sextic $\mathcal{W}$} 

In 1895 Wiman introduced in \cite{wiman} a Riemann surface $\mathcal{W}$ over the complex field $\mathbb{C}$ defined by the homogeneous equation of degree $6$
\begin{equation} \label{eqWiman1}
\mathcal{W}: X^6+Y^6+Z^6+(X^2+Y^2+Z^2)(X^4+Y^4+Z^4)-12X^2 Y^2 Z^2=0.
\end{equation}
The irreducible curve $\mathcal{W}$ has genus $6$ and $4$ ordinary double points, namely $[1:1:1]$, $[1:-1:1]$, $[-1:1:1]$, $[-1:-1:1]$. 

In \cite{kawakita} the curve $\mathcal{W}$ is studied as a curve defined over a finite field $\mathbb{F}_{p^2}$, where $p$ is a prime. Primes $p$  for which $\mathcal{W}$ is $\mathbb F_p$-maximal are characterized by applying the Kani-Rosen Theorem \cite{KR}, taking into account that the Jacobian $J_{\mathcal{W}}$ of $\mathcal{W}$ decomposes completely as $J_{\mathcal{W}} \sim \epsilon^6$ where $\epsilon$ is the elliptic curve defined by the homogeneous equation $\epsilon: Y^2Z-X(5X^2-95XZ+2^9Z^3)$.
It turns out that $\mathcal{W}$ is $\mathbb{F}_{p^2}$-maximal if and only if the characteristic $p$ satisfies the equation:
\begin{equation}\label{kawakita}
 \sum_{i=0}^{ \lfloor m/2 \rfloor }  \frac{m!}{(i!)^2(m-2i)!} 2^{9i}5^{-m-i}(-19)^{m-2i} \equiv 0   \pmod p. 
\end{equation}
Explicit values of $p$ for which Equation \eqref{kawakita} is satisfied are also found in \cite{kawakita}:

 $$p=19,29,79,199,269,359,439,499,509,599,919,1279.$$

Wiman showed that the automorphism group $Aut(\mathcal{W})$ of $\mathcal{W}$ over $\mathbb{C}$ is isomorphic to ${\rm S}_5$, the symmetric group on $5$ letters. 

In Section \ref{secautw1} we show that the same result holds true over an algebraically closed field of positive characteristic $p\ge 7$. The reason why $p$ has to be at least $7$ is that for $p=2,3$ the polynomial \eqref{eqWiman1} is reducible, while for $p=5$  the curve $\mathcal{W}$ is rational and hence its full automorphism group is the projective general linear group over the algebraic closure of $\mathbb F_5$.

\subsection{The Hermitian curve and its automorphism group}
Throughout this section, $q=p^n$, where $p$ is a prime number, $n$ is a positive integer and $\mathbb{K}$ is the algebraic closure of the finite field with $q$ elements $\mathbb{F}_q$. The Deligne-Lusztig curves defined over  $\mathbb{F}_q$ were originally introduced in \cite{DL}. Other than the projective line, there are three families of Deligne-Lusztig curves, named Hermitian curves, Suzuki curves and Ree curves. The Hermitian curve $\mathcal H_q$ arises from the algebraic group $^2A_2(q)={\rm PGU}(3,q)$ of order $(q^3+1)q^3(q^2-1)$. It has genus $q(q-1)/2$ and is $\mathbb F_{q^2}$-maximal. This curve is isomorphic to the curves listed below:
\begin{equation} \label{M1}
X^{q+1}+Y^{q+1}+Z^{q+1}=0;
\end{equation}
\begin{equation} \label{M2}
X^{q}Z+XZ^{q}-Y^{q+1}=0;
\end{equation}
\begin{equation} \label{M3}
XY^{q}-YX^{q}+\omega Z^{q+1}=0,
\end{equation}
where $\omega$ is a fixed element of $\mathbb{K}$ such that $\omega^{q-1}=-1$;
\begin{equation} \label{M4}
XY^{q}+YZ^{q}+\omega ZX^{q}=0.
\end{equation}
Each of the models (\ref{M1}),(\ref{M2}) and (\ref{M3}) is $\mathbb{F}_{q^2}$-isomorphic to $\cH_q$, while the model (\ref{M4}) is $\mathbb{F}_{q^6}$-isomorphic to $\cH_q$, since for a suitable element $a \in \mathbb{F}_{q^6}$, the projective map 
$$k: \mathbb{P}^2(\mathbb{K}) \rightarrow \mathbb{P}^2(\mathbb{K}), \ (X:Y:Z) \mapsto (aX+Y+a^{q^2+1}Z: a^{q^2+1}X+aY+Z:x+a^{q^2+1}Y+aZ),$$
changes  (\ref{M1}) into (\ref{M4}), see \cite[Proposition 4.6]{CKT1}.

The automorphism group $\aut(\cH_q)$ is isomorphic to the projective unitary group $\PGU(3,q)$, and it acts on the set $\cH_q(\mathbb{F}_{q^2})$ of all $\mathbb{F}_{q^2}$-rational points of $\cH_q$ as $\PGU(3,q)$ in its usual $2$-transitive permutation representation.
The combinatorial properties of $\cH_q(\mathbb{F}_{q^2})$ can be found in \cite{HP}. The size of $\cH_q(\mathbb{F}_{q^2})$ is equal to $q^3+1$, and a line of ${\rm PG}(2,q^2)$ has either $1$   or $q+1$ common points with $\cH_q(\mathbb{F}_{q^2})$. 
In the former case the line is said to be a $1$-secant or  tangent, in the former case a $(q+1)$-secant or chord.
A unitary polarity is associated with $\cH_q(\mathbb{F}_{q^2})$; its isotropic points are those in $\cH_q(\mathbb{F}_{q^2})$, and its isotropic lines are the $1$-secants of $\cH_q(\mathbb{F}_{q^2})$, that is, the tangents to $\cH_q$ at the points of $\cH_q(\mathbb{F}_{q^2})$.

A useful tool in our investigation is the classification of maximal subgroups of the projective special subgroup $\PSU(3,q)$ of $\PGU(3,q)$, going back to Mitchell and Hartley; see \cite{M}, \cite{H}, \cite{HO}.

\begin{theorem} \label{Mit} Let $d={\rm gcd}(3,q+1)$. Up to conjugacy, the subgroups below give a complete list of maximal subgroups of $\PSU(3,q)$.

\begin{itemize}
\item[(i)] the stabilizer of an $\mathbb{F}_{q^2}$-rational point of $\cH_q$. It has order $q^3(q^2-1)/d$;
\item[(ii)] the stabilizer of an $\mathbb{F}_{q^2}$-rational point off $\cH_q$ $($equivalently the stabilizer of a chord of $\cH_q(\mathbb{F}_{q^2}))$. It has order $q(q-1)(q+1)^2/d$;
\item[(iii)] the stabilizer of a self-polar triangle with respect to the unitary polarity associated to $\cH_q(\mathbb{F}_{q^2})$. It has order $6(q+1)^2/d$;
\item[(iv)] the normalizer of a (cyclic) Singer subgroup. It has order $3(q^2-q+1)/d$ and preserves a triangle
in $\PG(2,q^6)\setminus\PG(2,q^2)$ left invariant by the Frobenius collineation $\Phi_{q^2}:(X,Y,T)\mapsto (X^{q^2},Y^{q^2},T^{q^2})$ of $\PG(2,\mathbb{K})$;

{\rm for $p>2$:}
\item[(v)] ${\rm PGL}(2,q)$ preserving a conic;
\item[(vi)] $\PSU(3,p^m)$ with $m\mid n$ and $n/m$ odd;
\item[(vii)] subgroups containing $\PSU(3,p^m)$ as a normal subgroup of index $3$, when $m\mid n$, $n/m$ is odd, and $3$ divides both $n/m$ and $q+1$;
\item[(viii)] the Hessian groups of order $216$ when $9\mid(q+1)$, and of order $72$ and $36$ when $3\mid(q+1)$;
\item[(ix)] ${\rm \PSL(2,7)}$ when $p=7$ or $-7$ is not a square in $\mathbb{F}_q$;
\item[(x)] the alternating group ${\rm A}_6$ when either $p=3$ and $n$ is even, or $5$ is a square in $\mathbb{F}_q$ but $\mathbb{F}_q$ contains no cube root of unity;
\item[(xi)] the symmetric group ${\rm S}_6$ when $p=5$ and $n$ is odd;
\item[(xii)] the alternating group ${\rm A}_7$ when $p=5$ and $n$ is odd;

{\rm for $p=2$:}
\item[(xiii)] $\PSU(3,2^m)$ with $m\mid n$ and $n/m$ an odd prime;
\item[(xiv)] subgroups containing $\PSU(3,2^m)$ as a normal subgroup of index $3$, when $n=3m$ with $m$ odd;
\item[(xv)] a group of order $36$ when $n=1$.
\end{itemize}
\end{theorem}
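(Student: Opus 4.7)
The plan is to follow the classical strategy of Mitchell, analysing a maximal subgroup $M\le \PSU(3,q)$ according to the geometric configurations it fixes in $\PG(2,\mathbb{K})$, combined with a $p$-local analysis when $M$ acts irreducibly. One first uses the fact that $\PSU(3,q)$ sits inside $\PGL(3,\mathbb K)$ and acts as an automorphism group of $\cH_q$, so that notions of fixed points, fixed lines, fixed conics and fixed triangles for $M$ are meaningful. The classification then splits into a reducible case, which produces the geometric stabilizers (i)--(iv), and an irreducible case, which produces the remaining entries.

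\textbf{Reducible case.} If $M$ fixes a point $P\in\PG(2,\mathbb K)$, one examines its polar line $\ell$ under the unitary polarity. Either $P\in\cH_q(\mathbb F_{q^2})$ (and $\ell$ is the tangent there), giving a subgroup of the Borel-type stabilizer in (i); or $P$ is an $\mathbb F_{q^2}$-rational point off $\cH_q$, so that $M$ stabilizes the chord $\ell\cap\cH_q(\mathbb F_{q^2})$, giving a subgroup of the stabilizer in (ii). If instead $M$ fixes no point or line but stabilizes a triangle $\{P_1,P_2,P_3\}$, then either the triangle is self-polar (case (iii)) and $M$ is contained in the wreath product of a cyclic torus of order $(q+1)/d$ with $S_3$, or the triangle is $\Phi_{q^2}$-invariant but not pointwise $\mathbb F_{q^2}$-rational, forcing its vertices into $\PG(2,q^6)\setminus\PG(2,q^2)$ and $M$ to normalize a Singer cycle of order $(q^2-q+1)/d$, giving case (iv).

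\textbf{Irreducible case.} Here no non-trivial proper subspace or triangle is fixed. Following Mitchell--Hartley one selects a non-trivial element $x\in M$ and studies its conjugacy class in $\PSU(3,q)$: the possibilities are a unipotent element of order $p$, a semisimple element with eigenvalues in $\mathbb F_{q^2}$ or in $\mathbb F_{q^6}$, and an involution when $p\neq 2$. A Sylow-type argument shows that if $M$ contains a $p$-element then $M$ contains a full Sylow $p$-subgroup of some subfield group $\PSU(3,p^m)$, and maximality together with a Frattini-style argument forces $M$ to be one of the subfield subgroups of (vi)--(vii), and of (xiii)--(xiv) in characteristic $2$. If $|M|$ is coprime to $p$, then $M$ embeds into $\PGL(3,\mathbb K)$ as a finite linear group of order prime to $p$; invoking Dickson's classification of finite subgroups of $\PGL(3,\mathbb K)$ together with the constraint that $M$ preserves the Hermitian form leaves the small exceptional candidates $\PGL(2,q)$ (preserving a conic, case (v)), the Hessian groups of orders $36,72,216$ (case (viii)), and the sporadic $\PSL(2,7)$, ${\rm A}_6$, ${\rm S}_6$ and ${\rm A}_7$ subgroups (ix)--(xii). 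For each such candidate one must verify actual embeddability by matching traces of generators against the unitary form, producing the arithmetic conditions on $p$ and $n$.

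The main obstacle is this last, sporadic analysis in the irreducible coprime case: establishing existence and uniqueness up to conjugacy of the small exceptional maximal subgroups, while pinning down the precise arithmetic hypotheses on $q$ under which each of them appears. Distinguishing genuine embeddings from mere character-theoretic possibilities requires delicate use of the invariant Hermitian form together with the Sylow structure of each candidate simple group, and ruling out further maximal subgroups beyond those listed (for example checking that the candidates are indeed maximal and not contained in one of (i)--(vii)) is where most of the bookkeeping lies.
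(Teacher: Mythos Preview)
The paper does not prove this theorem at all: it is quoted as a classical preliminary result, with the proof attributed to Mitchell, Hartley and Hoffer via the references \cite{M}, \cite{H}, \cite{HO}. There is therefore no ``paper's own proof'' to compare against; the authors simply invoke the classification as background for their later arguments on the Hermitian curve.

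Your outline is a faithful high-level summary of the Mitchell--Hartley strategy (reducible vs.\ irreducible, then a $p$-local and coprime case split in the irreducible situation), and you correctly identify where the real work lies. But as written it is a plan rather than a proof: the passages ``a Sylow-type argument shows\ldots'', ``invoking Dickson's classification\ldots leaves\ldots'', and ``one must verify actual embeddability by matching traces'' each hide substantial arguments that span many pages in the original sources. If your goal is to match what the paper does, the appropriate move is simply to cite the classical references, as the authors do; if your goal is to supply an independent proof, be aware that the sporadic irreducible analysis you flag as the main obstacle genuinely is one, and would require either reproducing Mitchell's case-by-case work or appealing to the modern treatment via Aschbacher's theorem and the tables in Bray--Holt--Roney-Dougal.
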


In the following, a subgroup $G \leq \PGU(3,q)$ is said to be \textit{tame} if its order is coprime to $p$ and \textit{non-tame} otherwise.

In our investigation it is also useful to know how an element of $\PGU(3,q)$ of a given order acts on $\PG(2,\mathbb{K})$, and in particular on $\cH_q(\mathbb{F}_{q^2})$. This is stated in Lemma \ref{classificazione} with the usual terminology about collineations of projective planes; see e.g. \cite{HP}. In particular, a linear collineation $\sigma$ of $\PG(2,\mathbb{K})$ is a $(P,\ell)$-\emph{perspectivity}, if $\sigma$ preserves  each line through the point $P$ (the \emph{center} of $\sigma$), and fixes each point on the line $\ell$ (the \emph{axis} of $\sigma$). A $(P,\ell)$-perspectivity is either an \emph{elation} or a \emph{homology} according as $P\in \ell$ or $P\notin\ell$. A $(P,\ell)$-perspectivity is in  $\PGL(3,q^2)$ if and only if its center and its axis are in $\PG(2,\mathbb{F}_{q^2})$.
\begin{lemma}{\rm{(}\cite[Lemma 2.3]{MZRS}\rm{)}}\label{classificazione}
For a nontrivial element $\sigma\in \PGU(3,q)$, one of the following cases holds.
\begin{itemize}
\item[(A)] ${\rm ord}(\sigma)\mid(q+1)$ and $\sigma$ is a homology whose center $P$ is a point off $\cH_q$ and whose axis $\ell$ is a chord of $\cH_q(\mathbb{F}_{q^2})$ such that $(P,\ell)$ is a pole-polar pair with respect to the unitary polarity associated to $\cH_q(\mathbb{F}_{q^2})$.
\item[(B)] ${\rm ord}(\sigma)$ is coprime to $p$ and $\sigma$ fixes the vertices $P_1,P_2,P_3$ of a non-degenerate triangle $T$.
\begin{itemize}
\item[(B1)] The points $P_1,P_2,P_3$ are $\fqs$-rational, $P_1,P_2,P_3\notin\cH_q$ and the triangle $T$ is self-polar with respect to the unitary polarity associated to $\cH_q(\mathbb{F}_{q^2})$. Also, $\ord(\sigma)\mid(q+1)$.
\item[(B2)] The points $P_1,P_2,P_3$ are $\fqs$-rational, $P_1\notin\cH_q$, $P_2,P_3\in\cH_q$. 
     Also, $\ord(\sigma)\mid(q^2-1)$ and $\ord(\sigma)\nmid(q+1)$.
\item[(B3)] The points $P_1,P_2,P_3$ have coordinates in $\mathbb{F}_{q^6}\setminus\mathbb{F}_{q^2}$, $P_1,P_2,P_3\in\cH_q$. 
    Also, $\ord(\sigma)\mid (q^2-q+1)$.
\end{itemize}
\item[(C)] ${\rm ord}(\sigma)=p$ and $\sigma$ is an elation whose center $P$ is a point of $\cH_q$ and whose axis $\ell$ is a tangent of $\cH_q(\mathbb{F}_{q^2})$; here $(P,\ell)$ is a pole-polar pair with respect to the unitary polarity associated to $\cH_q(\mathbb{F}_{q^2})$.
\item[(D)] ${\rm ord}(\sigma)=p$ with $p\ne2$, or ${\rm ord}(\sigma)=4$ and $p=2$. In this case $\sigma$ fixes an $\fqs$-rational point $P$, with $P \in \cH_q$, and a line $\ell$ which is a tangent  of $\cH_q(\mathbb{F}_{q^2})$; here $(P,\ell)$ is a pole-polar pair with respect to the unitary polarity associated to $\cH_q(\mathbb{F}_{q^2})$.
\item[(E)] $p\mid{\rm ord}(\sigma)$, $p^2\nmid{\rm ord}(\sigma)$, and ${\rm ord}(\sigma)\ne p$. In this case $\sigma$ fixes two $\fqs$-rational points $P,Q$, 
     with $P\in\cH_q$, $Q\notin\cH_q$. 
\end{itemize}
\end{lemma}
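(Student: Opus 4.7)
The plan is to classify nontrivial elements $\sigma \in \PGU(3,q)$ by lifting each one to a matrix $A \in \GL(3,\mathbb{K})$ preserving (up to scalar) the Hermitian form that cuts out $\cH_q$, and then reading off the geometry of the fixed locus from the Jordan canonical form of $A$ over $\mathbb{K}$. The $3\times 3$ Jordan types are essentially: (i) three distinct eigenvalues, (ii) two distinct eigenvalues with the repeated one semisimple (a homology), (iii) a Jordan block of size $2$ plus a separate eigenvalue, and (iv) a single Jordan block of size $3$. Type (ii) produces case (A); type (i) produces the three subcases of (B); types (iii) and (iv), together with their interaction with $p$, produce (C), (D), (E). Throughout, the unitary constraint $\bar A^{\,\top} H A=H$ will impose strong restrictions on the eigenvalues (they must appear in pairs $\lambda,\bar\lambda^{-1}$, with fixed points on $\cH_q$ corresponding to isotropic eigenvectors).

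For the semisimple situation I would first handle the homology case: a $2$-dimensional eigenspace gives a fixed line $\ell$ pointwise, a $1$-dimensional eigenspace gives the center $P$, and unitarity forces $(P,\ell)$ to be a pole–polar pair with eigenvalue $\lambda$ on $P$ satisfying $\lambda^{q+1}=1$, so $\ord(\sigma)\mid q+1$ and $P\notin \cH_q$, yielding (A). For three distinct eigenvalues the fixed triangle $\{P_1,P_2,P_3\}$ is non-degenerate and $\ord(\sigma)$ is coprime to $p$. Subcases (B1)--(B3) are then separated by how $\Phi_{q^2}$ permutes the $P_i$ and which of them lie on $\cH_q$: if all three are $\mathbb{F}_{q^2}$-rational and off the curve the triangle is forced to be self-polar (giving (B1) and $\ord(\sigma)\mid q+1$); if one is off the curve and the other two are the $\mathbb{F}_{q^2}$-rational points where the polar of the first meets $\cH_q$ we get (B2) with order dividing $q^2-1$ but not $q+1$; if $\Phi_{q^2}$ cycles the three points then they lie in $\PG(2,\mathbb{F}_{q^6})\setminus \PG(2,\mathbb{F}_{q^2})$ and are all on $\cH_q$, giving (B3) with order dividing $|\mathbb{F}_{q^6}^*|/|\mathbb{F}_{q^2}^*|=q^4+q^2+1$ and further dividing $q^2-q+1$ after intersecting with the unitary torus.

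The non-semisimple cases are the technically delicate part. A single Jordan block of size $3$ (necessarily with eigenvalue a $(q+1)$-th root of unity, hence WLOG $1$ after scaling) has order exactly $p$ and has a unique fixed flag $P\subset \ell$; the unitary constraint forces $P\in \cH_q$ and $\ell$ to be its tangent, producing case (C). A Jordan block of size $2$ with eigenvalue $\lambda$ plus an eigenvalue $\mu\ne\lambda$ gives two fixed points, one on $\cH_q$ (from the generalized eigenspace) and one off, with different subcases depending on whether $\lambda=\mu$-condition modulo $p$ yields order $p$ (case (D)) or order $pm$ with $(m,p)=1$ (case (E)). The extra $p=2$ oddity in (D), where $\ord(\sigma)=4$ can arise from a size-$2$ block together with the characteristic-$2$ behavior of the underlying $\SU(3,q)$ Sylow, is handled by a direct computation in a Sylow $2$-subgroup.

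The main obstacle I expect is the bookkeeping for the rationality of the fixed points: showing in case (B3) that an irreducible characteristic polynomial over $\mathbb{F}_{q^2}$ must have its roots, and hence the corresponding eigenlines, defined over $\mathbb{F}_{q^6}$ and not over any smaller field, and verifying that the isotropy condition forces all three fixed points onto $\cH_q$. Once this, together with the size-$2$ Jordan block analysis over characteristic $p=2$, is settled, the remaining order and pole–polar statements follow from standard facts about Singer tori and the centralizer structure of $\PGU(3,q)$.
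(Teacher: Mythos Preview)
The paper does not prove this lemma; it merely quotes it from \cite{MZRS}, so there is no in-paper argument to compare against. Your Jordan-form approach is the standard one and is essentially how such classifications are carried out.

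That said, there is a concrete mix-up in your non-semisimple analysis. An elation (case (C)) is a perspectivity with center on its axis, hence has a \emph{two}-dimensional eigenspace: its Jordan type is a $2\times2$ block together with a $1\times1$ block with the \emph{same} eigenvalue (so the axis is fixed pointwise). A single $3\times3$ Jordan block has a one-dimensional eigenspace, so it fixes exactly one point and one line through it but does \emph{not} fix the line pointwise; this is case (D), not (C). In particular, for $p=2$ the $3\times3$ unipotent block squares to a nontrivial elation and has order $4$, which is exactly the ``$\ord(\sigma)=4$ and $p=2$'' clause of (D). Case (E) then comes from a $2\times2$ block with eigenvalue $\lambda$ and a $1\times1$ block with $\mu\ne\lambda$, giving order $p\cdot\ord(\lambda\mu^{-1})$. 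Once you reassign the Jordan types to (C), (D), (E) in this way, the rest of your outline (the unitary constraint forcing pole--polar pairs, the Frobenius orbit analysis for (B1)--(B3), and the Singer torus order computation) goes through.
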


Throughout the paper, a nontrivial element of $\PGU(3,q)$ is said to be of type (A), (B), (B1), (B2), (B3), (C), (D), or (E), as given in Lemma \ref{classificazione}.

Every subgroup $G$ of $\PGU(3,q)$ produces a quotient curve $\cH_q/G$, and the cover $\cH_q\rightarrow\cH_q/G$ is a Galois cover defined over $\mathbb{F}_{q^2}$  where the degree of the different divisor $\Delta$ is given by the Riemann-Hurwitz formula \cite[Theorem 3.4.13]{Sti},
\begin{equation} \label{RHformula}
\Delta=(2g(\cH_q)-2)-|G|(2g(\cH_q/G)-2).
\end{equation}

On the other hand, $\Delta=\sum_{\sigma\in G\setminus\{id\}}i(\sigma)$, where $i(\sigma)\geq 0$ is given by the Hilbert's different formula \cite[Thm. 3.8.7]{Sti}, namely
\begin{equation}\label{contributo}
\textstyle{i(\sigma)=\sum_{P\in\cH_q(\bar{\mathbb{F}}_q)}v_P(\sigma(t)-t),}
\end{equation}
where $t$ is a local parameter at $P$.

By analyzing the geometric properties of the elements $\sigma \in \PGU(3,q)$, it turns out that there are only a few possibilities for $i(\sigma)$.
This is obtained as a corollary of Lemma \ref{classificazione} and stated in the following proposition, see \cite{MZRS}.

\begin{theorem}{\rm{(}\cite[Theorem 2.7]{MZRS}\rm{)}}\label{caratteri}
For a nontrivial element $\sigma\in \PGU(3,q)$ one of the following cases occurs.
\begin{enumerate}
\item If $\ord(\sigma)=2$ and $2\mid(q+1)$, then $\sigma$ is of type {\rm(A)} and $i(\sigma)=q+1$.
\item If $\ord(\sigma)=3$, $3 \mid(q+1)$ and $\sigma$ is of type {\rm(B3)}, then $i(\sigma)=3$.
\item If $\ord(\sigma)\ne 2$, $\ord(\sigma)\mid(q+1)$ and $\sigma$ is of type {\rm(A)}, then $i(\sigma)=q+1$.
\item If $\ord(\sigma)\ne 2$, $\ord(\sigma)\mid(q+1)$ and $\sigma$ is of type {\rm(B1)}, then $i(\sigma)=0$.
\item If $\ord(\sigma)\mid(q^2-1)$ and $\ord(\sigma)\nmid(q+1)$, then $\sigma$ is of type {\rm(B2)} and $i(\sigma)=2$.
\item If $\ord(\sigma)\ne3$ and $\ord(\sigma)\mid(q^2-q+1)$, then $\sigma$ is of type {\rm(B3)} and $i(\sigma)=3$.
\item If $p=2$ and $\ord(\sigma)=4$, then $\sigma$ is of type {\rm(D)} and $i(\sigma)=2$.
\item If $\ord(\sigma)=p$, $p \ne2$ and $\sigma$ is of type {\rm(D)}, then $i(\sigma)=2$.
\item If $\ord(\sigma)=p$ and $\sigma$ is of type {\rm(C)}, then $i(\sigma)=q+2$.
\item If $\ord(\sigma)\ne p$, $p\mid\ord(\sigma)$ and $\ord(\sigma)\ne4$, then $\sigma$ is of type {\rm(E)} and $i(\sigma)=1$.
\end{enumerate}
\end{theorem}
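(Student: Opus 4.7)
The plan is to prove the theorem by case analysis guided by Lemma~\ref{classificazione}, which classifies every nontrivial $\sigma\in \PGU(3,q)$ by its action on $\PG(2,\mathbb{K})$. For each type, the Hilbert different formula \eqref{contributo} reduces the computation of $i(\sigma)$ to a sum of local contributions $v_P(\sigma(t)-t)$ taken over the fixed points $P$ of $\sigma$ lying on $\cH_q$, and Lemma~\ref{classificazione} already tells us which of the fixed points of $\sigma$ on $\PG(2,\mathbb{K})$ belong to $\cH_q$.

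First I would dispatch the tame cases (Cases 1--6), where $\gcd(\ord(\sigma),p)=1$. A standard linearization argument shows that at each fixed point $P\in\cH_q$ there is a local parameter $t$ on which $\sigma$ acts as $t\mapsto \zeta t$ for some nontrivial root of unity $\zeta$; hence $v_P(\sigma(t)-t)=1$, and $i(\sigma)$ reduces to counting the fixed points of $\sigma$ on $\cH_q$. These counts are immediate from Lemma~\ref{classificazione}: for type (A) the fixed points on $\cH_q$ are the $q+1$ intersections of the axis (a chord) with $\cH_q$, yielding Cases 1 and 3 with $i(\sigma)=q+1$; for type (B1) all three vertices of the self-polar triangle lie off $\cH_q$, giving $i(\sigma)=0$ in Case 4; for type (B2) exactly two vertices lie on $\cH_q$, giving $i(\sigma)=2$ in Case 5; and for type (B3) all three vertices lie on $\cH_q$, giving $i(\sigma)=3$ in Cases 2 and 6.

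Next I would treat the wild cases (Cases 7--10), where $p\mid\ord(\sigma)$. Here I would work in a convenient model such as $Y^{q+1}=X^qZ+XZ^q$ and conjugate $\sigma$ into a matrix normal form placing the relevant fixed point at infinity. For an elation of type (C) (Case 9), whose center is a flex on $\cH_q$ with triple-contact tangent, an expansion of $\sigma(t)-t$ in a correctly chosen local parameter produces $v_P(\sigma(t)-t)=q+2$. For type (D) (Cases 7--8) a similar but shorter expansion at the unique fixed point on $\cH_q$ yields contribution $2$, and for type (E) (Case 10) $\sigma$ has exactly one fixed point on $\cH_q$ where the local action splits into a tame factor and a wild factor, producing contribution $1$. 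The main obstacle will be the local computation in Case 9: the naive affine coordinate at the flex fails to uniformize because the corresponding partial derivative vanishes modulo $p$, so one must identify the correct local parameter (a suitable ratio of the affine coordinates does the job) and carry the expansion of $\sigma(t)-t$ out to precisely order $q+2$; once this computation is in hand, the remaining wild cases are variants requiring only a normal form for $\sigma$, a valid uniformizer, and a direct reading of $v_P(\sigma(t)-t)$.
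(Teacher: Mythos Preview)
The paper does not prove Theorem~\ref{caratteri}: it is quoted as a preliminary result from \cite[Theorem~2.7]{MZRS}, with only the one-line remark that it is ``obtained as a corollary of Lemma~\ref{classificazione}''. There is therefore no in-paper argument to compare against.

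Your outline is nonetheless the standard route, and almost certainly the one in \cite{MZRS}. The tame half is exactly right: when $p\nmid\ord(\sigma)$ one has $v_P(\sigma(t)-t)=1$ at every fixed point $P\in\cH_q$, so $i(\sigma)$ is just the number of fixed points on $\cH_q$, and Lemma~\ref{classificazione} supplies those counts. For the wild half, conjugating $\sigma$ into the stabiliser of $P_\infty=(1:0:0)$ in the model \eqref{M2} and expanding in the local parameter $u=Y/X$ (with $v=Z/X=u^{q+1}+\cdots$) gives the stated values directly: for an elation $(X:Y:Z)\mapsto(X+cZ:Y:Z)$ one finds $\sigma(u)-u=-cuv+\cdots$, hence valuation $q+2$; for type~(D) with $b\neq0$ one gets $\sigma(u)-u=-b^qu^2+\cdots$, valuation $2$; and for type~(E) the nontrivial semisimple part forces $\sigma(u)=\lambda u+\cdots$ with $\lambda\neq1$, valuation $1$. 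One small correction: the tangent to $\cH_q$ at an $\mathbb{F}_{q^2}$-rational point has intersection multiplicity $q+1$, not $3$; your ``triple-contact'' is only literally true for $q=2$, and it is precisely this $(q+1)$-fold contact that accounts for the exponent in $i(\sigma)=q+2$.
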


\subsection{Automorphism groups of algebraic curves}
This section provides a collection of preliminary results on automorphism groups of algebraic curves that will be used in the following sections for the determination of the full automorphism group of the Wiman's sextic $\mathcal{W}$.

\begin{theorem}{\rm (\cite[Theorem 11.78]{HKT})}\label{th11.78}
Let $\cC$ be an irreducible algebraic curve of genus $g \geq 1$ defined over a field of characteristic $p$ and let $G$ be an automorphism group of $\cC$. Let  $G_P$ be the stabilizer of a place $P$ of $\cC$
and $G^{(i)}_P$ be the $i$-th ramification group of $G$ at $P$. Then
\begin{equation}\label{11.78}
|G_P| \leq \frac{4p}{p-1}g^2.
\end{equation}
Also, if $\cC_i$ denotes the quotient curve $\cC/G^{(i)}_P$,  then one of the following cases occurs: 
\begin{itemize}
\item[(i)] $\cC_1$ is not rational, and $|G^{(1)}_P|\leq g$; 

\item[(ii)] $\cC_1$ is rational, $G^{(1)}_P$ has a short orbit other than $\{ P\}$, and 
$$|G^{(1)}_P|\leq \frac{p}{p-1}g;$$

\item[(iii)] $\cC_1$ and $\cC_2$ are rational, $\{ P \}$ is the unique short orbit of $G^{(1)}_P$, and
$$|G^{(1)}_P|\leq\frac{4p}{(p-1)^2}g^2.$$

\end{itemize}
\end{theorem}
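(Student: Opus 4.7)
The plan is to exploit the ramification filtration $G_P = G_P^{(0)}\supseteq G_P^{(1)}\supseteq G_P^{(2)}\supseteq\cdots$ at the place $P$, together with its standard group-theoretic structure: $G_P^{(1)}$ is the unique Sylow $p$-subgroup of $G_P$ and is normal, the quotient $G_P/G_P^{(1)}$ is cyclic of order prime to $p$ embedded via the tame character into the residue field, and each successive quotient $G_P^{(i)}/G_P^{(i+1)}$ for $i\geq1$ is an elementary abelian $p$-group that embeds additively into the cotangent line at $P$.

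The central engine is Hilbert's different formula
$$d_P = \sum_{i\geq 0}\bigl(|G_P^{(i)}|-1\bigr),$$
applied together with Riemann-Hurwitz to the Galois cover $\cC\to\cC_1:=\cC/G_P^{(1)}$:
$$2g-2 = |G_P^{(1)}|\bigl(2g(\cC_1)-2\bigr)+\deg\Delta,\qquad \deg\Delta\geq d_P\geq 2(|G_P^{(1)}|-1).$$
In case (i), $g(\cC_1)\geq 1$ makes the first term on the right non-negative, so $|G_P^{(1)}|\leq g$ follows at once. In case (ii), $\cC_1$ is rational, but a second short orbit $O$ of $G_P^{(1)}$ contributes an additional amount of at least $|G_P^{(1)}|-|O|$ to $\deg\Delta$, and rearranging the resulting inequality yields $|G_P^{(1)}|\leq \frac{p}{p-1}g$. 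Case (iii) requires descending one more step in the filtration: since $\cC_2=\cC/G_P^{(2)}$ is also rational and $\{P\}$ remains the unique short orbit, applying Riemann-Hurwitz to $\cC\to\cC_2$ while exploiting the elementary abelian structure of $G_P^{(1)}/G_P^{(2)}$ produces the sharper quadratic bound $|G_P^{(1)}|\leq \frac{4p}{(p-1)^2}g^2$.

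Finally, the estimate $|G_P|\leq\frac{4p}{p-1}g^2$ follows by writing $|G_P|=[G_P:G_P^{(1)}]\cdot|G_P^{(1)}|$ and bounding the tame cyclic quotient $[G_P:G_P^{(1)}]$ by a linear expression in $g$ via its faithful action on the cotangent line (a routine Riemann-Hurwitz argument on the cyclic tame cover by $\langle\sigma\rangle$ for a generator $\sigma$ lifting the tame quotient), then combining with the case-by-case bounds above. The main obstacle is case (iii): one must carefully track how the descending chain of higher ramification groups inflates $\deg\Delta$ while both $\cC_1$ and $\cC_2$ remain of genus zero, and only the Artin-Schreier structure of the consecutive elementary abelian quotients $G_P^{(i)}/G_P^{(i+1)}$ produces the sharp constant $\frac{4p}{(p-1)^2}$ rather than a weaker linear-in-$g$ estimate. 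The tame and wild factors being analyzed by essentially independent arguments is what allows them to be multiplied to give the final quadratic bound.
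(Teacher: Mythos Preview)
The paper does not prove this statement: it is quoted verbatim from \cite[Theorem 11.78]{HKT} as a preliminary result and is used as a black box in the proof of Lemma~\ref{autw1tame}. There is therefore no proof in the paper to compare your attempt against.

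That said, your outline follows the standard strategy of the proof in \cite{HKT}: Riemann--Hurwitz applied to $\cC\to\cC/G_P^{(1)}$ together with Hilbert's different formula, splitting into the three cases according to whether $\cC_1$ is rational and whether $G_P^{(1)}$ has a second short orbit. A couple of points deserve care. In case~(ii) the contribution of the second short orbit $O$ to $\deg\Delta$ is at least $2(|G_P^{(1)}|-|O|)$, not $|G_P^{(1)}|-|O|$ (wild ramification at each $Q\in O$ gives $d_Q\ge 2(e_Q-1)$); after simplification the bound still comes out as stated once one uses $|O|\le |G_P^{(1)}|/p$. For the global bound $|G_P|\le \frac{4p}{p-1}g^2$, the tame factor $[G_P:G_P^{(1)}]$ is not bounded by a ``faithful action on the cotangent line'' alone (that only embeds it in $k^\times$, which is infinite over an algebraically closed field); one must instead feed the full different $d_P=\sum_{i\ge0}(|G_P^{(i)}|-1)$ into Riemann--Hurwitz for $\cC\to\cC/G_P$ and combine with the case-by-case bounds on $|G_P^{(1)}|$. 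Your sketch of case~(iii) is the vaguest part; in the reference the sharp constant comes from the conjugation action of $G_P^{(1)}/G_P^{(2)}$ forcing $|G_P^{(2)}|$ to be large relative to $|G_P^{(1)}|$, which then inflates $d_P$ enough to yield the quadratic bound.
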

\begin{theorem}{\rm (\cite[Theorem 11.56]{HKT})}\label{tame}
Let $\cC$ be an irreducible curve of genus $g \geq 2$ over a field $\mathbb{K}$. If $char(\mathbb{K})=0$ or $char(\mathbb{K})=p >0$ with $(p,|Aut(\cC)|)=1$ then 
\begin{equation}\label{hb}
|Aut(\cC)| \leq 84(g-1).
\end{equation}
\end{theorem}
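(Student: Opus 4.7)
The plan is to derive the bound from the Riemann--Hurwitz formula applied to the Galois cover $\cC\to\cC/G$, using the hypothesis $\gcd(p,|G|)=1$ to guarantee that all ramification is tame. Write $g'=g(\cC/G)$, and let $e_1,\dots,e_r$ denote the ramification indices of the branch places of the cover. Tameness forces $G^{(i)}_P=\{1\}$ for every $i\ge 1$ and every $P\in\cC$, so the different exponent at a ramified place equals $e_P-1$ and the Riemann--Hurwitz formula reduces to
\[
2g-2 \;=\; |G|\,(2g'-2)\;+\;|G|\sum_{i=1}^{r}\Bigl(1-\tfrac{1}{e_i}\Bigr).
\]
Dividing by $|G|$, the theorem reduces to the purely arithmetic inequality
\[
R \;:=\; 2g'-2+\sum_{i=1}^{r}\Bigl(1-\tfrac{1}{e_i}\Bigr) \;\ge\; \tfrac{1}{42},
\]
which must be established whenever $R>0$ (and positivity is automatic from $g\ge 2$). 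Granted $R\ge 1/42$, the desired bound $|G|\le 84(g-1)$ follows immediately.

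Next I would perform a case split on $g'$ and $r$. If $g'\ge 2$ then $R\ge 2$, which is much stronger than needed. If $g'=1$ then $r\ge 1$ (otherwise $R\le 0$), and already $R\ge 1-1/e_1\ge 1/2$. If $g'=0$, positivity of $R$ forces $\sum_{i=1}^r (1-1/e_i)>2$; for $r\ge 5$ this gives $R\ge 1/2$ directly, while for $r=4$ a short check on ordered tuples $(e_1\le\cdots\le e_4)$ shows that the minimum positive value of $R$ is $1/6$, attained at $(2,2,2,3)$. The cases $r\le 2$ with $g'=0$ violate $R>0$ and so cannot occur.

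The genuinely delicate case, and the principal obstacle, is $g'=0$ with $r=3$: one must minimise $R=1-1/e_1-1/e_2-1/e_3$ over all positive integer triples $(e_1,e_2,e_3)$ with $R>0$, i.e.\ the classical Diophantine condition $1/e_1+1/e_2+1/e_3<1$. Assuming $e_1\le e_2\le e_3$, one first argues $e_1\in\{2,3\}$, since $e_1\ge 4$ would give $\sum 1/e_i\le 3/4<1$ but then $R\le 1/4$ only after refinement, so a careful split is needed. In the subcase $e_1=3$ one forces $e_2\ge 3$ with strict inequality excluded on small values, and the minimum positive $R$ in this subcase turns out to be $1/12$, attained at $(3,3,4)$. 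In the subcase $e_1=2$, the analysis of $e_2$ yields $e_2\in\{3,4,5,\dots\}$, and the tightest configuration occurs at $(e_2,e_3)=(3,7)$, giving exactly $R=1-1/2-1/3-1/7=1/42$. This identifies $(2,3,7)$ as the extremal triple, proves the sharp inequality, and concludes the proof.
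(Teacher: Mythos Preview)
The paper does not supply its own proof of this theorem: it is quoted verbatim from \cite[Theorem 11.56]{HKT} as a background result, so there is no in-paper argument to compare against. That said, the approach you take is exactly the classical one (and is the one in \cite{HKT}): apply Riemann--Hurwitz to the quotient map $\cC\to\cC/G$, use tameness to write the different contribution as $\sum(1-1/e_i)$, and then minimise $R=2g'-2+\sum(1-1/e_i)$ over the positive values. Indeed, the paper itself carries out precisely this style of case analysis in its proof of Lemma~\ref{mbt}, so your method is in the same spirit as the paper's techniques.

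Your case split is correct and complete; the extremal triple $(2,3,7)$ and the value $R=1/42$ are the right endpoint. One small slip to fix: in the $r=3$ discussion you write that $e_1\ge 4$ gives ``$R\le 1/4$ only after refinement,'' but the inequality goes the other way---$e_1\ge 4$ immediately yields $R\ge 1-3/4=1/4\ge 1/42$, disposing of that subcase with no further work. With that typo corrected and the $e_1\in\{2,3\}$ subcases spelled out as you indicate (checking $(3,3,4)$ gives $1/12$, $(2,4,5)$ gives $1/20$, $(2,3,7)$ gives $1/42$, and that $(2,3,e_3)$ with $e_3\le 6$ is non-positive), the argument is complete and rigorous.
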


The previous result is known in the literature as \textit{Classical Hurwitz bound}. The following theorem describes the short orbits structure of an automorphism group $G \leq Aut(\cC)$ of an algebraic curve $\cC$ of genus $g \geq 2$ for which the Classical Hurwitz bound does not hold. 

\begin{theorem}{\rm (\cite[Theorem 11.126 and Theorem 11.56]{HKT})} \label{thmHurwitz}
Let $\cC$ be an irreducible curve of genus $g \geq 2$ and let $G \leq Aut(\cC)$ with $|G| > 84(g-1).$ Then the quotient curve $\cC / G$ is rational and $G$ has at most three short orbits as follows.
\begin{enumerate}
\item Exactly three short orbits, two tame and one non-tame. Each point in the tame short orbits has stabilizer in $G$ of order $2$; 
\item exactly two short orbits, both non-tame;
\item only one short orbit which is non-tame;
\item exactly two short orbits, one tame and one non-tame. In this case $|G|< 8g^3$, with the following exceptions:
\begin{itemize}
\item $p=2$ and $\cC$ is isomorphic to the hyperelliptic curve $y^2+y=x^{2^k+1}$ with genus $2^{k-1}$ ;
\item $p>2$ and $\cC$ is isomorphic to the Roquette curve $y^2=x^q-x$ with genus $(q-1)/2$ ;
\item $p\geq 2$ and $\cC$ is isomorphic to the Hermitian curve $y^{q+1}=x^q+x$ with genus $(q^2-q)/2$ ;
\item $p=2$ and $\cC$ is isomorphic to the Suzuki curve $y^q+y=x^{q_0}(x^q+x)$ with genus $q_0(q-1)$ .
\end{itemize}
\end{enumerate}
\end{theorem}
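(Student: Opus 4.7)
The plan is to analyze the Galois cover $\cC \to \cC/G$ of degree $|G|$ via the Riemann--Hurwitz formula \eqref{RHformula}. Let $g_0$ denote the genus of $\cC/G$. I would first rule out $g_0 \geq 1$. The identity $2g - 2 = |G|(2g_0 - 2) + \deg(\Delta)$ with $\deg(\Delta) \geq 0$ and $|G| > 84(g-1)$ immediately excludes $g_0 \geq 2$, as this would yield $2g-2 \geq 2|G| > 168(g-1)$. For $g_0 = 1$, the cover must have at least one short orbit (otherwise $g = 1$), and already a tame such orbit contributes at least $|G|/2$ to $\deg(\Delta)$, with wild ramification only increasing this; this forces $|G| \leq 4(g-1)$, a contradiction. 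Hence $\cC/G$ is rational.

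With $g_0 = 0$ the formula rewrites as
\[
2 + \frac{2g-2}{|G|} \;=\; \sum_{i=1}^{r} \frac{d_{P_i}}{|G_{P_i}|},
\]
the sum running over representatives of the short orbits. A tame orbit contributes $1 - 1/|G_{P_i}| \in [1/2, 1)$, while a non-tame one contributes strictly more (possibly more than $1$). The next step is to bound the number $r$ of short orbits: if every orbit were tame, the classical analysis in the $g_0 = 0$ case would reproduce $|G| \leq 84(g-1)$, so at least one short orbit must be non-tame. If moreover $r \geq 4$, comparing the left-hand side (slightly above $2$) with the right-hand side (already at least $4 \cdot \tfrac{1}{2} = 2$ from the tame contributions, plus the excess coming from a wild orbit) forces $(2g-2)/|G|$ above a positive absolute constant, giving $|G| = O(g)$ and again contradicting $|G| > 84(g-1)$. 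Thus $r \leq 3$, and the four listed configurations exhaust the admissible combinations of tame versus non-tame short orbits. The stabilizer-order condition in case (1) follows from the same type of numerical estimate: a tame stabilizer of order $\geq 3$ combined with the mandatory non-tame orbit already gives $(2g-2)/|G| \geq 1/6$, excluding such orbits under our hypothesis.

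For case (4) the bound $|G| < 8g^3$ outside the exceptional list is the delicate point. I would apply Theorem~\ref{th11.78} to the stabilizer $G_P$ at a point of the non-tame short orbit. In the subcases where $\cC/G_P^{(1)}$ is not rational, or is rational with a short orbit distinct from $\{P\}$, the theorem gives $|G_P^{(1)}| \leq \tfrac{p}{p-1}g$; together with $|G_P| \leq \tfrac{4p}{p-1}g^{2}$, direct substitution into the Riemann--Hurwitz identity above yields $|G| < 8g^3$ with room to spare.

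The main obstacle is the last subcase of Theorem~\ref{th11.78}, where $\{P\}$ is the unique short orbit of $G_P^{(1)}$ and $|G_P^{(1)}|$ may reach $\tfrac{4p}{(p-1)^{2}}g^{2}$. Here the extremal size of the wild stabilizer places extraordinarily tight constraints on the function field: the fixed field of a maximal $p$-subgroup of $G_P^{(1)}$ must be rational, and the higher ramification filtration forces the extension $K(\cC)/K(\cC)^{G_P^{(1)}}$ to have a very specific Artin--Schreier(--Witt) shape. At this point I would invoke the Nakajima--Stichtenoth type classification of curves admitting a $p$-subgroup of automorphisms near this extremal bound, and verify, case by case, that the only function fields compatible with these constraints belong to the four families listed: the hyperelliptic curves $y^{2}+y=x^{2^{k}+1}$, the Roquette curves $y^{2}=x^{q}-x$, the Hermitian curves, and the Suzuki curves. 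Checking directly that each of these actually satisfies $|G| \geq 8g^{3}$ completes the argument.
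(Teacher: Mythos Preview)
The paper does not prove Theorem~\ref{thmHurwitz}: it is quoted as a preliminary result from \cite[Theorem~11.126 and Theorem~11.56]{HKT}, with no argument given. So there is no proof in the paper to compare your proposal against.

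On the substance of your proposal itself: the reduction to $g_0=0$, the count of short orbits, and the derivation of the stabilizer condition in case~(1) are all correct and follow the standard Riemann--Hurwitz bookkeeping (indeed, the paper carries out a parallel computation in the proof of Lemma~\ref{mbt} for the tame situation). One small slip: when you argue that $r\geq 4$ is impossible you write ``at least $4\cdot\tfrac12=2$ from the tame contributions,'' but with one orbit non-tame there are only three tame ones; the estimate still goes through because a non-tame orbit contributes at least $1$, so the total is $\geq 1+3\cdot\tfrac12=\tfrac52>2+\tfrac{1}{42}$.

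The genuine soft spot is case~(4). Saying you would ``invoke the Nakajima--Stichtenoth type classification'' and ``verify, case by case'' hides essentially the entire content of Henn's theorem and its refinements; this is a substantial piece of work (several pages in \cite{HKT}) involving a detailed analysis of the higher ramification filtration, the structure of $G_P^{(1)}$ acting on a rational function field, and the identification of the resulting Artin--Schreier towers. Your outline points in the right direction, but as written it is a promissory note rather than a proof of the $8g^3$ bound and its exceptional list. Since the paper itself simply cites this result, your sketch is already more than the paper supplies; just be aware that the last paragraph is where all the difficulty lives.
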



The following lemma considers the short orbits structure of a large tame automorphism group of a curve $\cC$ of genus $g \geq 2$.

\begin{lemma} \label{mbt}
Let $p$ be a prime. Let $\cC$ be an irreducible algebraic curve of genus $g \geq 2$ defined over a field of characteristic $p$ such that $40(g-1)<|Aut(\cC)| \leq 84(g-1)$. Assume also that $Aut(\cC)$ is tame. 

Then $\aut(\cC)$ has exactly $3$ tame short orbits $O_i$ for $i=1,2,3$, $\cC/ \aut(\cC)$ is rational, and one of the following cases occurs.
\begin{enumerate}
\item $|O_1|=|\aut(\cC)|/2$, $|O_2|=|\aut(\cC)|/3$, $|O_3|=|\aut(\cC)|/7$ and $p \geq 11$. In this cases $\aut(\cC)$ has order $84(g-1)$; 
\item $|O_1|=|\aut(\cC)|/2$, $|O_2|=|\aut(\cC)|/3$ and $|O_3|=|\aut(\cC)|/8$. In this cases $|\aut(\cC)|=48(g-1)$.
\end{enumerate}
\end{lemma}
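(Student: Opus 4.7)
The plan is to apply the Riemann--Hurwitz formula to the Galois cover $\cC \to \cC/G$, $G := \aut(\cC)$, using tameness to kill any wild ramification contribution. Writing $g' := g(\cC/G)$ and letting $e_1,\dots,e_r$ denote the common stabilizer orders on the $r$ short orbits (so $|O_i| = |G|/e_i$), tameness yields
$$
2g-2 \;=\; |G|\Bigl(2g'-2 + \sum_{i=1}^{r}\bigl(1 - \tfrac{1}{e_i}\bigr)\Bigr).
$$
Set $\lambda := (2g-2)/|G|$; the assumption $40(g-1) < |G| \le 84(g-1)$ forces $\lambda \in [1/42,\,1/20)$. The first step is to rule out $g' \ge 1$: $g' \ge 2$ gives $\lambda \ge 2$, a contradiction, while $g' = 1$ forces $\sum(1-1/e_i) = \lambda < 1/20$, incompatible with each summand being $\ge 1/2$ unless $r=0$, which returns $g=1$. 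Hence $g' = 0$ and $\cC/G$ is rational.

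With $g' = 0$ the equation reads $\sum_{i=1}^{r}(1-1/e_i) = 2 + \lambda \in [85/42,\,41/20)$. Since each summand lies in $[1/2,1)$, the values $r\le 2$ give a total below $2$ and $r\ge 5$ a total at least $5/2 > 41/20$; moreover $r = 4$ is excluded because all $e_i = 2$ yields exactly $2 < 85/42$, whereas any $e_i \ge 3$ pushes the sum past $41/20$. So $r = 3$, and assuming $e_1 \le e_2 \le e_3$ one obtains
$$
\frac{1}{e_1}+\frac{1}{e_2}+\frac{1}{e_3} \;=\; 1-\lambda \;\in\; \Bigl(\tfrac{19}{20},\,\tfrac{41}{42}\Bigr].
$$
A routine case analysis on this two-sided bound -- $3/e_1 > 19/20$ yields $e_1 \in\{2,3\}$ with $e_1 = 3$ quickly ruled out, then $e_2 \in\{3,4\}$ with $e_2=4$ forcing $e_3=5$ on the excluded lower endpoint -- isolates precisely $(e_1,e_2,e_3)=(2,3,7)$ with $\lambda = 1/42$, $|G|=84(g-1)$, and $(e_1,e_2,e_3)=(2,3,8)$ with $\lambda = 1/24$, $|G|=48(g-1)$.

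Finally, tameness of $G$ forces $p \nmid e_i$ for every $i$, so in the $(2,3,7)$-case $p \notin \{2,3,7\}$. The sharper statement $p \ge 11$ in case 1 is the one subtle point I would expect to be the main obstacle of the proof: it requires separately ruling out $p = 5$, which the Riemann--Hurwitz case analysis above does not by itself detect, and which I would attempt either by analysing possible tame $(2,3,7)$-Hurwitz realisations in characteristic $5$ or by invoking an additional constraint coming from the structure of the curve under consideration. The remainder of the argument is essentially mechanical bookkeeping of the Riemann--Hurwitz inequalities.
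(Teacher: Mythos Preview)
Your argument is essentially identical to the paper's: both run the tame Riemann--Hurwitz formula, eliminate $g'\ge 1$, then case-split on the number $r$ of short orbits to force $r=3$ and isolate the triples $(2,3,7)$ and $(2,3,8)$. The paper organises the $r=3$ analysis slightly differently (bounding the $d_i' = 1-1/e_i$ one at a time rather than working with $\sum 1/e_i$), but the content is the same.

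On the point you flag as the main obstacle: you are right to be suspicious, and in fact the paper's own proof does \emph{not} establish $p\ge 11$ in case~1 either. Tameness yields only $p\nmid |G|$, hence $p\notin\{2,3,7\}$ from $2,3,7\mid 84$; nothing in the argument excludes $p=5$. The paper's proof simply concludes ``Now one of the cases 1 and 2 occurs'' without addressing this, so the sharper claim $p\ge 11$ is unproved there as well. For the paper's intended application (the Wiman sextic with $g=6$, where $p=5$ is excluded for independent reasons), this does not matter, but as a standalone lemma the statement appears to overreach. Your instinct that this would require an additional argument is correct; you have not missed anything the paper supplies.
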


\begin{proof}
From the Riemann-Hurwitz formula
\begin{equation} \label{eqdiff}
2g-2=|\aut(\cC)|(2g^\prime -2 +d^\prime),
\end{equation}
where $d_P^\prime=d_p/|\aut(\cC)_P|$ and $d^\prime = \sum_{P} d_P^\prime$. Here the summation is only over a set of representatives of places in $\cC$, exactly one from each short orbit of $Aut(\cC)$. 

So, it is necessary to investigate the possibilities for $|\aut(\cC)|$ according to the number $r$ of short orbits of $Aut(\cC)$ on $\cC$. From the Hilbert different formula, see \cite[Theorem 11.70]{HKT}, $d_P \geq e_P-1$, with equality holding if and only if $e_P$ is prime to $p$. Therefore, if $d_P>0$, then $d_P^\prime \geq 1/2$. More precisely, since $Aut(\cC)$ is tame, $d_P^\prime=(e_P-1)/e_P$. 
Hence, if $d>0$, then $d^\prime \geq 1/2$. 
If $g^\prime \geq 2$ then $|Aut(\cC)| \leq g-1$, a contradiction. For $g^\prime=1$, it follows that $d^\prime>0$ since $g \geq 2$, and hence $|Aut(\cC)| \leq 4(g-1)$, a contradiction. Thus $g^\prime=0$. Then
$$2g-2=|Aut(\cC)|(d^\prime -2).$$

In particular, $d^\prime >2$. Therefore $G$ has some, say $r \geq 1$, short orbits on $\cC$. 
Take representatives $Q_1,\ldots,Q_r$, from each short orbit, and let $d_i^\prime=d_{Q_i}^\prime$ for $i=1,\ldots,r$. After a change of indices, it may be assumed that $d_i^\prime \leq d_j^\prime$ for $i \leq j$. 

\begin{itemize}
\item When $r \geq 5$, then $d^\prime \geq 5/2$, and hence $|Aut(\cC)| \leq 4(g-1)$.
\item When $r=4$ then $d^\prime>2$ and $d_i^\prime >1/2$ for at least one place $P$. As $d_i^\prime>1/2$ implies $d_i^\prime \geq 2/3$, so $d^\prime-2 \geq 1/6$, whence $|Aut(\cC)| \leq 12(g-1)$.
\item When $r=3$ then again use $d^\prime>2$. If $d_1^\prime=2/3$ then $d_3^\prime \geq 3/4$ and hence $|Aut(\cC)| \leq 24(g-1)$. If $d_1^\prime=1/2$ and $d_2^\prime \geq 3/4$ then $|Aut(\cC)| \leq 40(g-1)$. 
Thus, assume that $d_1^\prime=1/2$ and $d_2^\prime=2/3$. From \eqref{eqdiff}, 
$$2(g-1)=|Aut(\cC)|(d^\prime-2)=|Aut(\cC)|\bigg(\frac{1}{2}+\frac{2}{3}+d_3^\prime-2\bigg)>40(g-1)\bigg(\frac{1}{2}+\frac{2}{3}+d_3^\prime-2\bigg),$$
implying that $d_3^\prime < 53/60$. Also $d^\prime=1/2+2/3+d_3^\prime-2>0$ and hence $d_3^\prime>5/6$, giving $d_3^\prime \geq 6/7$. 
Thus, we get that $6/7 \leq d_3^\prime < 53/60$, and hence either $d_3^\prime=6/7$ or $d_3^\prime=7/8$. Now one of the cases 1 and 2 occurs. 
\item When $r=2$ then $d^\prime=d_1^\prime+d_2^\prime>2$. This can only occur when either $d_1^\prime$ or $d_2^\prime$ or both are greater than $1$. Hence, one of cases 2 and 4 of Theorem \ref{thmHurwitz} occurs.
This is not possible as we are assuming that $Aut(\cC)$ is tame.

\item When $r=1$ then $d^\prime=d_1^\prime>2$, and case 3 of Theorem \ref{thmHurwitz} occur.
Again, this is not possible as we are assuming that $Aut(\cC)$ is tame.
\end{itemize}
\end{proof}

A careful analysis of the automorphism group of algebraic curves $\cC$ of even genus $g \geq 2$ can be found in \cite{gkaut}. The following result provides some restrictions to the structure of an automorphism group of $\cC$ admitting a minimal normal subgroup of order $4$. We recall that a \textit{minimal normal subgroup} $N$ of a group $G$ is a normal subgroup of $G$ such that the only normal subgroup of $G$ properly contained in $N$ is the trivial subgroup.

\begin{lemma}{\rm (\cite[Lemma 6.6]{gkaut})}\label{gkaut}
 Let $\cC$ be an irreducible curve of even genus $g \geq 2$ defined over a finite field of odd characteristic. If the automorphism group $Aut(\cC)$ of $\cC$ has a minimal normal subgroup $N$ of order $4$, then either for $Aut(\cC)$, or for a normal subgroup of $G$ of index $3$, the following condition is satisfied:
\begin{itemize}

\item $G = O(G) \rtimes S_2$ where $S_2$ is the direct product of a cyclic group by a group of order $2$.
\end{itemize}
\end{lemma}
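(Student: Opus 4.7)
The first step is to pin down the isomorphism type of $N$. Any minimal normal subgroup of a finite group is characteristically simple, hence a direct product of isomorphic simple groups. Since $|N|=4$, either $N\cong C_4$ or $N\cong C_2\times C_2$. But $C_4$ contains a unique subgroup of order $2$, which is characteristic in $N$ and would therefore be normal in $G$ and properly contained in $N$, contradicting the minimality of $N$. Hence $N\cong C_2\times C_2$.

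Next, I would consider the action of $G$ on $N$ by conjugation. This gives a homomorphism $\varphi:G\to \Aut(N)\cong S_3$, with kernel $K=C_G(N)$, and whose image is a subgroup of $S_3$. If $|\varphi(G)|\in\{1,2\}$, set $\tilde G=G$. If $|\varphi(G)|=3$, take $\tilde G=K$, which is normal in $G$ of index $3$. In both cases $N\leq Z(\tilde G)$, and $\tilde G$ is either $G$ or the required index-$3$ normal subgroup. The case $|\varphi(G)|=6$ needs separate treatment, since $S_3$ has no normal subgroup of index $3$; here the even-genus input must be used to rule out this possibility (or to produce an index-$3$ normal subgroup by a different route, e.g.\ by analysing the abelianisation of $G$).

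Now I would bring in the even-genus hypothesis. Applying the Riemann--Hurwitz formula to the tame cover $\cC\to \cC/N$ gives
\[
2g-2=4(2g'-2)+\sum_{P}(e_P-1),\qquad e_P\in\{1,2,4\},
\]
while for each of the three involutions $\sigma\in N$, Hilbert's different formula yields $|\mathrm{Fix}(\sigma)|=2(g-2g_\sigma+1)$, which is always even. Combining these with the parity $2g-2\equiv 2\pmod 4$ (valid since $g$ is even) produces strong constraints on the fixed-point configuration of the involutions of $N$, and more generally of any elementary abelian $2$-subgroup of $\tilde G$ containing $N$. These constraints force the $2$-rank of $\tilde G$ to be exactly $2$, and a Sylow-$2$ subgroup $S_2$ of $\tilde G$ to admit a cyclic normal subgroup of index $2$. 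Since $N\leq Z(\tilde G)$, the only surviving possibility among the $2$-groups of $2$-rank $2$ with a cyclic subgroup of index $2$ is $S_2\cong C_{2^a}\times C_2$, ruling out the dihedral, semi-dihedral and generalised quaternion types.

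Finally, since $S_2$ is abelian and, by the previous step, $S_2\leq Z(N_{\tilde G}(S_2))$, Burnside's normal $p$-complement theorem provides a normal $2$-complement $O(\tilde G)$, whence $\tilde G=O(\tilde G)\rtimes S_2$ with $S_2\cong C_{2^a}\times C_2$, as required. The main obstacle is the third step: extracting from the even-genus hypothesis a clean structural restriction on $S_2$ that simultaneously bounds its $2$-rank by $2$ and excludes the dihedral-type extensions of a cyclic group by $C_2$. Exhibiting a normal subgroup of index $3$ in the exceptional case $\varphi(G)\cong S_3$ is the other delicate point that has to be handled by a refined use of the even-genus condition.
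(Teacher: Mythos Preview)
The paper does not prove this lemma; it is quoted from \cite[Lemma~6.6]{gkaut} (Giulietti--Korchm\'aros) and used as a black box in Section~\ref{secautw1}. So there is no proof in the paper to compare against.

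Your sketch assembles the right ingredients --- the identification $N\cong C_2\times C_2$, the conjugation map $\varphi:G\to\Aut(N)\cong S_3$, the use of the even-genus hypothesis through Riemann--Hurwitz to constrain the Sylow $2$-structure, and Burnside's normal $p$-complement theorem at the end --- but it contains a slip and leaves the two hard steps open. The slip: when $|\varphi(G)|=2$ you set $\tilde G=G$ and assert $N\leq Z(\tilde G)$, but in that case $C_G(N)$ has index $2$ in $G$, so $N$ is \emph{not} central in $G$ (only one of its three involutions is); your later use of ``$N\leq Z(\tilde G)$'' to pin down $S_2$ then does not apply directly. The two points you yourself flag as obstacles --- producing an index-$3$ normal subgroup when $\varphi(G)\cong S_3$, and deducing from ``$g$ even'' that a Sylow $2$-subgroup of $\tilde G$ is exactly $C_{2^a}\times C_2$ rather than dihedral, semidihedral or generalised quaternion --- are precisely where the substance of the argument in the cited source lies, and they require a finer fixed-point/orbit accounting for the involutions of $N$ (and their interaction with further $2$-elements) than what you have written. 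As an outline you are on the right track, but it is not yet a proof.
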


Let $\cC$ be an algebraic curve of genus $g \geq 2$ defined over an algebraically closed field $\mathbb{K}$ of positive characteristic $p$. If $|Aut(\cC)|$ is divisible by $p$ then bounds for the order of a Sylow $p$-subgroup of $Aut(\cC)$ can be found in \cite{nak}. 

In the following Theorem bounds for the order of a Sylow $p$-subgroup of $Aut(\cC)$ are written with respect to an important birational invariant of $\cC$, namely its \textit{$p$-rank} $\gamma$. It is defined to be the rank of the (elementary abelian) group of the $p$-torsion points in the Jacobian variety of $\cC$; moreover, $\gamma \leq g$ and when the equality holds then $\cC$ is called an \textit{ordinary} (or \textit{general}) curve; see \cite[Section 6.7]{HKT}.

\begin{theorem}{\rm (\cite[Theorem 1 (i)]{nak})} \label{nak}
Let $\cC$ be a curve of genus $g \geq 2$ and $p$-rank $\gamma$ defined over an algebraically closed field of positive characteristic $p$. Let $H$ be a Sylow $p$-subgroup of $Aut(\cC)$. If $\gamma \geq 2$ then
\begin{equation} \label{nakbound}
|H| \leq c_p (\gamma-1),
\end{equation}
where $c_p=p/(p-2)$ for $p \geq 3$ and $c_2=4$.
\end{theorem}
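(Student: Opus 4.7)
The natural tool is the Deuring--Shafarevich formula applied to the Galois $p$-cover $\cC \to \cC_0 := \cC/H$. Letting $\gamma_0$ denote the $p$-rank of $\cC_0$, $s$ the number of branch points, and $e_1, \dots, e_s$ the corresponding ramification indices (each a power of $p$, since $H$ is a $p$-group and all ramification is wild), the formula reads
\[
\gamma - 1 \;=\; |H|(\gamma_0 - 1) \;+\; |H|\sum_{i=1}^{s}\left(1 - \frac{1}{e_i}\right).
\]
The strategy is to split into cases according to $\gamma_0$ and, in each case, extract an inequality of the form $|H| \leq C(\gamma - 1)$; the worst constant $C$ will turn out to be $c_p$.

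If $\gamma_0 \geq 2$, then the formula yields $\gamma - 1 \geq |H|$ at once. If $\gamma_0 = 1$, then $\gamma \geq 2$ forces at least one branch point, and using $1 - 1/e_i \geq (p-1)/p$ gives $|H| \leq \frac{p}{p-1}(\gamma - 1)$. In both subcases the resulting bound is sharper than $c_p(\gamma-1)$, so the critical case is $\gamma_0 = 0$: the formula becomes $\gamma - 1 = |H|\bigl(\sum_i(1 - 1/e_i) - 1\bigr)$, and the hypothesis $\gamma \geq 2$ forces $\sum_i(1 - 1/e_i) > 1$, whence $s \geq 2$.

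The plan is then to minimize $\sum_i(1 - 1/e_i) - 1$ over admissible ramification data. For $p \geq 3$ each summand is at least $(p-1)/p$, and the minimum is realized by $s = 2$ with $e_1 = e_2 = p$, yielding $\sum - 1 \geq (p-2)/p$ and hence $|H| \leq \frac{p}{p-2}(\gamma - 1) = c_p(\gamma - 1)$. For $p = 2$ the degenerate configuration $s = 2$ with $e_1 = e_2 = 2$ is excluded because it would give $\gamma - 1 = 0$, contradicting $\gamma \geq 2$; the next admissible configuration is $s = 2$, $e_1 = 2$, $e_2 = 4$, for which $\sum - 1 = 1/4$ and so $|H| \leq 4(\gamma - 1) = c_2(\gamma - 1)$. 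Any configuration with $s \geq 3$ or with larger $e_i$ produces a strictly larger value of $\sum - 1$, and hence a tighter bound on $|H|$, so none of them can improve on the extremal cases.

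The main obstacle is the Deuring--Shafarevich formula itself, whose proof (going back to Deuring, Shafarevich, Subrao and Crew) is a substantial input; once it is in hand, the argument is a clean extremization over a handful of ramification configurations. A small but essential technical point in characteristic $2$ is verifying that the degenerate case $e_1 = e_2 = 2$ is genuinely ruled out by $\gamma \geq 2$; this is precisely what prevents the naive value $p/(p-2)$, which diverges at $p=2$, from being the correct constant, and forces the separate definition $c_2 = 4$.
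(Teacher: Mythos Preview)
The paper does not supply its own proof of this statement: Theorem~\ref{nak} is quoted directly from Nakajima \cite[Theorem~1(i)]{nak} and used as a black box in Lemma~\ref{autw1tame}. There is therefore nothing in the paper to compare your argument against.

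That said, your proof is correct and is essentially Nakajima's original argument. The Deuring--Shafarevich formula is exactly the right tool; your case split on $\gamma_0$ is the standard one, and the extremization in the critical case $\gamma_0 = 0$ is accurate. For $p \geq 3$ the minimum of $\sum_i(1-1/e_i) - 1$ over admissible ramification data is $(p-2)/p$, realized at $s=2$, $e_1=e_2=p$; for $p=2$ the configuration $(e_1,e_2)=(2,2)$ gives zero and is ruled out by $\gamma \geq 2$, and the next smallest value, from $(2,4)$, is $1/4$. Your closing remark that larger $s$ or larger $e_i$ only increase the sum is correct and disposes of all remaining configurations.
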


Curves $\cC$ together with a $p$-group $H$ of automorphisms such that the bound \eqref{nakbound} is attained are called \textit{Nakajima extremal curves}. Giulietti and Korchm\'aros in \cite{GKext} showed that the full automorphism group of Nakajima extremal curves has a precise structure.

\begin{theorem}{\rm (\cite[Theorem 1.3]{GKext})} \label{nakext}
Let $\cC$ be a Nakajima extremal curve, and $H$ be a Sylow $p$-subgroup of $Aut(\cC)$. Then either $H$ is a normal subgroup of $Aut(\cC)$ and $Aut(\cC)$ is the semidirect product of $H$ by a subgroup of a dihedral group of order $2(p-1)$, or $p=3$ and, for some subgroup $M$ of $H$ of index $3$, $M$ is a normal subgroup of $Aut(\cC)$ and $Aut(\cC)/M$ is isomorphic to a subgroup of ${\rm GL}(2,3)$.
\end{theorem}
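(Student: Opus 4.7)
The plan is to exploit the rigidity forced by the equality in Nakajima's bound. First, I would examine the quotient cover $\pi\colon \cC\to\cC/H$ using the Deuring--Shafarevich formula for $p$-groups: if $\gamma_0$ denotes the $p$-rank of $\cC/H$ and the ramification points of $\pi$ have higher ramification invariants $d_{i}$, then $\gamma-1=|H|(\gamma_0-1)+\sum(d_{i}-1)$. Combining this with $|H|=c_p(\gamma-1)$ (and the fact $c_p<p/(p-1)$ when $p\ge 3$) forces $\gamma_0=0$ and pins down the ramification locus to be essentially a single totally ramified fixed point $Q\in\cC/H$, together with (for $p=2$) a tightly controlled set of additional ramified points. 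In particular, $\cC/H$ is rational and $H$ fixes a unique place $P$ of $\cC$ together with the information needed to make the Hasse--Arf/Hilbert different calculation sharp.

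Next I would pass to the normalizer $N=N_{\Aut(\cC)}(H)$ and show that any tame automorphism normalizing $H$ must fix $P$ (the unique wildly ramified place), so $N/H$ embeds into $\Aut(\mathbb{P}^1)$ stabilizing the point below $P$ and the ramification configuration. This embeds $N/H$ into an affine group $\mathbb{G}_a\rtimes\mathbb{G}_m$, and then, being tame, into the $\mathbb{G}_m$-part, cyclic of order dividing $p-1$. A second application of the ramification data (the second fixed point on $\mathbb{P}^1$, or the involutive symmetry arising when $H$ has two fixed places in the $p=2$ regime) upgrades this cyclic group to a subgroup of a dihedral group of order $2(p-1)$, as claimed.

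The remaining task is to show that, apart from the announced $p=3$ exception, every Sylow $p$-subgroup of $\Aut(\cC)$ coincides with $H$, i.e.\ $H$ is normal. For this I would use the standard counting: if $H$ is not normal, the number $n_p$ of Sylow $p$-subgroups is $\equiv 1\pmod p$ and $>1$, and each conjugate $H^g$ is also Nakajima-extremal, forcing the fixed places of distinct conjugates to lie in the same short orbit of $\Aut(\cC)$. A case analysis of how two maximal $p$-subgroups can share (or fail to share) their ramification locus, combined with the Deuring--Shafarevich equation applied to the group $\langle H,H^g\rangle$, rules out all characteristics except $p=3$; in that characteristic one obtains precisely the possibility that a subgroup $M\trianglelefteq H$ of index $3$ is normal in $\Aut(\cC)$ and the quotient $\Aut(\cC)/M$ sits inside $\mathrm{GL}(2,3)$, the latter arising because the $2$-dimensional $\mathbb{F}_3$-representation of $H/M$ on the cotangent space at $P$ can be extended compatibly to $\Aut(\cC)/M$.

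The hard part is the last step: controlling how distinct Sylow $p$-subgroups interact when none of the ``generic'' constraints force $H$ to be normal. This is where the $p=3$ exception genuinely surfaces, and it requires combining the Deuring--Shafarevich equality, the sharpened Hasse--Arf ramification filtration, and representation-theoretic information on the action of $N/H$ on the cotangent space at $P$; the rest of the argument is in comparison a routine extraction of consequences once $\pi\colon\cC\to\cC/H$ has been pinned down by the equality in \eqref{nakbound}.
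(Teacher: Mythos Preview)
This theorem is not proved in the present paper: it is quoted verbatim from \cite[Theorem 1.3]{GKext} and used as a black box in Section~\ref{secautw1} (specifically, in the $p=7$ case of Lemma~\ref{autw1tame}). There is therefore no ``paper's own proof'' to compare your proposal against.

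For what it is worth, your outline follows the broad shape of the argument in \cite{GKext}: one first pins down the quotient $\cC/H$ and the ramification of $\cC\to\cC/H$ using Deuring--Shafarevich together with the equality $|H|=c_p(\gamma-1)$, then analyses the normalizer of $H$ via its induced action on the rational quotient, and finally handles the possibility of several Sylow $p$-subgroups. Two cautions. First, your description of the ramification (``essentially a single totally ramified fixed point'') is too quick: in the extremal situation for odd $p$ the cover $\cC\to\cC/H$ has exactly two totally ramified places and no others, and it is precisely this pair that produces the dihedral complement of order dividing $2(p-1)$; getting this wrong would make the dihedral conclusion look ad~hoc. Second, the heuristic that the $p=3$ exception arises from a ``$2$-dimensional $\mathbb{F}_3$-representation on the cotangent space'' is not how \cite{GKext} isolates it; there the case split comes from analysing the action of $\Aut(\cC)$ on the set of fixed places of the various Sylow $p$-subgroups and invoking the classification of finite $2$-transitive groups (or, more concretely, of sharply $2$-transitive groups and small $\PSL$'s), which for $p=3$ lets $\PSL(2,3)$ and its overgroups in $\mathrm{GL}(2,3)$ slip through. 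Your sketch would need substantial expansion at that point to become a proof.
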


\section{The automorphism group of $\mathcal{W}$} \label{secautw1}

Wiman proved that the automorphism group of $\mathcal{W}$ over the complex field $\mathbb C$ is the symmetric group ${\rm S}_5$. In this section we show that this holds true also in positive characteristics $p$ when $p \geq 7$. 
Let $$F(X,Y,Z)=X^6+Y^6+Z^6+(X^2+Y^2+Z^2)(X^4+Y^4+Z^4)-12X^2 Y^2 Z^2$$ be the defining polynomial of  $\mathcal{W}$.
First of all, in the following remark, we justify the hypothesis on $p$.

\begin{remark} \label{ppiccoli}
Let $\mathcal{W}$ be the Wiman's sextic defined as in Equation \eqref{eqWiman1} over a field of characteristic $p$. Then the following holds.
\begin{itemize}
\item If $p=2$ or $p=3$ then the polynomial $F(X,Y,Z) \in \mathbb{F}_{p}[X,Y,Z]$ is reducible.
\item If $p=5$ then $\mathcal{W}$ is rational. In particular from {\rm \cite[Theorem 11.14]{HKT}}, $Aut(\mathcal{W}) \cong \PGL(2,\mathbb{K})$, where $\mathbb{K}$ is the algebraic closure of $\mathbb{F}_p$.
\end{itemize}
\end{remark}

From now on we assume that $p \geq 7$.  

First of all, we note that the following three rational maps are automorphisms of $\mathcal{W}$ of order $2$:
\begin{equation} \label{tauphirho}
\phi: [X:Y:Z]\mapsto [X:-Y:Z], \quad
\tau: [X:Y:Z]\mapsto [-X:Y:Z], \quad
\rho: [X:Y:Z]\mapsto [Y:X:Z].
\end{equation}

Likewise by direct checking, it is easily seen that the following map provides an automorphism of order $3$ of $\mathcal{W}$:
\begin{equation} \label{gamma}
\gamma: [X:Y:Z] \mapsto [-Z:-X:Y].
\end{equation}
A more complicated automorphism of order $5$ of $\mathcal{W}$ is given by the following map:
\begin{equation} \label{alpha}
\alpha:[X:Y:Z]\mapsto [g_0(X,Y,Z):g_1(X,Y,Z):g_2(X,Y,Z)],
\end{equation}
where 
$$g_0(X,Y,Z)=-X^2+XY+XZ-Y^2-YZ+Z^2,$$
$$g_1(X,Y,Z)=-X^2+XY+XZ+Y^2-YZ-Z^2,$$
$$g_2(X,Y,Z)=X^2+XY+XZ-Y^2-YZ-Z^2.$$

Indeed the following computation shows that $\alpha$ is an automorphism of $\mathcal{W}$,

$$\alpha(F(X,Y,Z))=\alpha(X^6+Y^6+Z^6+(X^2+Y^2+Z^2)(X^4+Y^4+Z^4)-12X^2 Y^2 Z^2) =$$
$$ g_0(X,Y,Z)^6+g_1(X,Y,Z)^6+g_2(X,Y,Z)^6+$$
$$+(g_0(X,Y,Z)^2+g_1(X,Y,Z)^2+g_2(X,Y,Z)^2)(g_0(X,Y,Z)^4+g_1(X,Y,Z)^4+g_2(X,Y,Z)^4)+$$
$$-12g_0(X,Y,Z)^2 g_1(X,Y,Z)^2 g_2(X,Y,Z)^2=$$
$$2^6(Y+Z)^2(X-Z)^2(X-Y)^2(2X^6 + X^4Y^2 + X^4Z^2 + X^2Y^4 - 12X^2Y^2Z^2 + X^2Z^4 + 2Y^6 +Y^4Z^2 + Y^2Z^4 + 2Z^6)$$
$$=2^6(Y+Z)^2(X-Z)^2(X-Y)^2F(X,Y,Z)=0.$$

Also, one can show by direct computation that $$\alpha^5:[X:Y:Z]\mapsto [\overline{X}:\overline{Y}:\overline{Z}]$$
with $$\overline{X}=(Y - Z)^2 (Y + Z)^9 X    (X - Z)^{10} (X + Z)^5    (X - Y)^4   (X + Y),$$
$$\overline{Y}=(Y - Z)^2 (Y + Z)^9 Y    (X - Z)^{10} (X + Z)^5    (X - Y)^4   (X + Y),$$
$$\overline{Z}=(Y - Z)^2 (Y + Z)^9 Z    (X - Z)^{10} (X + Z)^5    (X- Y)^4   (X + Y),$$
implying $\alpha^5=1$. In this way, considering the group generated by the automorphisms defined up to now, we have that $120| |Aut(\mathcal{W})|$. 

The following lemma ensures that $Aut(\mathcal{W})$ is tame and hence from Theorem \ref{tame} that the Classical Hurwitz bound $|Aut(\mathcal{W})| \leq 84(g(\mathcal{W})-1)$ is satisfied. 

\begin{lemma} \label{autw1tame}
Let $\mathcal{W}$ be the Wiman's sextic defined as in Equation \eqref{eqWiman1} over a field of characteristic $p \geq 7$. Then $Aut(\mathcal{W})$ is tame. In particular $|Aut(\mathcal{W})| \leq 84(g(\mathcal{W})-1)=420$. Since also $120 \mid |Aut(\mathcal{W})|$ then $|Aut(\mathcal{W})| \in \{120,240,360\}$
\end{lemma}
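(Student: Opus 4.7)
The plan is to prove the three claims in sequence, with tameness being the only substantive one. Once $Aut(\mathcal{W})$ is shown to be tame, Theorem \ref{tame} yields $|Aut(\mathcal{W})| \leq 84(g(\mathcal{W})-1) = 420$, and the divisibility $120 \mid |Aut(\mathcal{W})|$ already established via the explicit generators $\phi, \tau, \rho, \gamma, \alpha$ restricts the order to the multiples of $120$ not exceeding $420$, namely $\{120, 240, 360\}$.

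For tameness, argue by contradiction: assume $p \mid |Aut(\mathcal{W})|$, pick $\sigma \in Aut(\mathcal{W})$ of order $p$ and a place $P$ fixed by $\sigma$. The first ramification group $G^{(1)}_P$ is a nontrivial $p$-group containing $\sigma$, so $|G^{(1)}_P| \geq p \geq 7$. Apply Theorem \ref{th11.78} with $g = 6$: case (i) gives $|G^{(1)}_P| \leq 6 < 7$, impossible; case (ii) gives $|G^{(1)}_P| \leq 6p/(p-1)$, which is strictly less than $p$ as soon as $p \geq 11$ and equals $p$ only when $p = 7$; case (iii) gives $|G^{(1)}_P| \leq 144p/(p-1)^2$, a bound that drops below $p$ for every prime $p \geq 17$ and, for $p \in \{7, 11, 13\}$, admits only $p$ itself as a $p$-power. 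Hence $p \in \{7, 11, 13\}$ and in each case $|G^{(1)}_P| = p$, so $G^{(1)}_P = \langle\sigma\rangle$.

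In both surviving cases (ii) and (iii) Theorem \ref{th11.78} asserts that $\mathcal{W}/G^{(1)}_P = \mathcal{W}/\langle\sigma\rangle$ is rational, so Riemann-Hurwitz applied to this degree-$p$ cover with $g' = 0$ yields $\deg(\mathrm{Diff}) = 2p + 10$. Hilbert's different formula makes each fixed-place contribution equal to $(m+1)(p-1)$ for some $m \geq 1$, hence a multiple of $p-1$, so $(p-1) \mid 2p + 10 \equiv 12 \pmod{p-1}$; among primes $p \geq 7$ this forces $p \in \{7, 13\}$, eliminating $p = 11$. For the remaining two primes the plan is to enumerate the admissible decompositions of $\deg(\mathrm{Diff}) \in \{24, 36\}$ as a sum of terms $(m_i+1)(p-1)$, $m_i \geq 1$, over the (necessarily at most two) fixed places of $\sigma$; combine this with the short-orbit constraint of Theorem \ref{th11.78} (a unique fixed place in case (iii), at least two in case (ii)) and with the semidirect structure $G_P = \langle\sigma\rangle \rtimes C$, $|C| \mid p-1$; then use the already-constructed action of $S_5 \leq Aut(\mathcal{W})$ to show that the orbit of $P$ under $Aut(\mathcal{W})$, equivalently the conjugacy class of $\sigma$, is forced to be incompatible with the stabilizer bound $|G_P| \leq 4pg^2/(p-1)$ of Theorem \ref{th11.78}.

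The main obstacle will be this final elimination step for $p = 7$ and $p = 13$: the Riemann-Hurwitz and Hilbert bookkeeping are not by themselves sharp enough, and the contradiction must come from a careful interplay between the ramification portrait of $\sigma$ and the explicit $S_5$-action on $\mathcal{W}$ (for instance, through the normalizer of $\langle\sigma\rangle$, which must act on the at most two fixed places of $\sigma$ and is therefore small, whereas $S_5$ forces many conjugates of $\sigma$ to exist, producing too many Sylow $p$-subgroups relative to the global bound on $|Aut(\mathcal{W})|$ supplied by the same Theorem \ref{th11.78}). Once tameness is secured, the deduction in the first paragraph delivers $|Aut(\mathcal{W})| \in \{120, 240, 360\}$.
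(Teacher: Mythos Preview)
Your reduction to $p \in \{7,11,13\}$ via Theorem~\ref{th11.78} is sound, and the divisibility $(p-1)\mid 12$ coming from Riemann--Hurwitz and Hilbert's different formula is a clean way to dispose of $p=11$. But the argument stops there: for $p=7$ and $p=13$ you offer only a plan, and you yourself flag it as ``the main obstacle.'' As written this is a genuine gap, and the sketch you give does not close it. You appeal to ``the already-constructed action of $S_5$,'' yet at this stage only $120\mid |Aut(\mathcal W)|$ has been established, not that the explicit generators $\phi,\tau,\rho,\gamma,\alpha$ actually close up to a copy of $S_5$. More seriously, your proposed endgame---counting Sylow $p$-subgroups against ``the global bound on $|Aut(\mathcal W)|$ supplied by Theorem~\ref{th11.78}''---rests on a misreading: Theorem~\ref{th11.78} bounds only a \emph{stabilizer} $|G_P|$, not $|Aut(\mathcal W)|$ globally. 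For instance at $p=13$ your own analysis gives $\sigma$ a unique fixed place, so the normalizer of $\langle\sigma\rangle$ sits inside a stabilizer of order at most $156$; but since $\gcd(120,13)=1$ one would have $1560\mid |Aut(\mathcal W)|$, and then $n_{13}=40$ Sylow subgroups each with normalizer of order $39$ is fully consistent with every constraint you have written down. Nothing in your outline produces a contradiction.

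The paper takes a different and decisive route for the small primes. It verifies (with MAGMA) that $\mathcal W$ is \emph{ordinary} for $p=7$ and $p=11$, i.e.\ has $p$-rank $\gamma=g=6$, and then invokes Nakajima's bound (Theorem~\ref{nak}): a Sylow $p$-subgroup $H$ satisfies $|H|\le \frac{p}{p-2}(\gamma-1)=\frac{5p}{p-2}$. This is $<7$ when $p=11$ (immediate contradiction) and $=7$ when $p=7$, making $\mathcal W$ Nakajima extremal; Theorem~\ref{nakext} then forces $|Aut(\mathcal W)|$ to divide $2p(p-1)=84$, contradicting $120\mid |Aut(\mathcal W)|$. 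The ingredient you are missing is precisely this $p$-rank input; the purely ramification-theoretic bookkeeping you outline does not finish the job for $p\in\{7,13\}$.
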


\begin{proof}
Since $120$ divides $Aut(\mathcal{W})$, proving that $Aut(\mathcal{W})$ is tame also the second part of the claim follows from Theorem \ref{tame}.
\begin{itemize}
\item Assume that $p \geq 13$. If $Aut(\mathcal{W})>84(g-1)$, then from Theorem \ref{thmHurwitz}, $Aut(\mathcal{W})$ contains an auotomorphism $\sigma$ of order a multiple of $p$ and such that $\langle \sigma \rangle$ fixes some place $P$ of $\mathcal{W}$. This contradicts Theorem \ref{th11.78} as
$$\frac{(p-1)^2}{4} \geq 36=g^2.$$

\item Assume that $p=11$. Then by direct checking with MAGMA, $\mathcal{W}$ is an ordinary curve of genus $g=\gamma=6$. Suppose by contradiction that $Aut(\mathcal{W})$ is nontame and let $H$ be a Sylow $11$-subgroup of $Aut(\mathcal{W})$. From Theorem \ref{nak},
$$11 \leq |H| \leq \frac{11}{9}(6-1)<7,$$
a contradiction.

\item Hence we can assume that $p=7$. By direct checking with MAGMA, $\mathcal{W}$ is an ordinary curve of genus $g=\gamma=6$. Assuming again by contradiction that a Sylow $7$-subgroup $H$ of $Aut(\mathcal{W})$ is non-trivial, from Theorem \ref{nak} we have that
$$7 \leq |H| \leq \frac{7}{5}(6-1)=7.$$
Thus, $|H|=7$ and $\mathcal{W}$ is a Nakajima extremal curve. From Theorem \ref{nakext}, $|Aut(\mathcal{W})|$ divides $2p(p-1)=84$. Since $120$ divides $|Aut(\mathcal{W})|$ we have a contradiction.
\end{itemize}
\end{proof}

Consider the subgroup $G$ of $Aut(\mathcal{W})$ generated by the three involutions $\phi , \tau$ and $\rho$ as defined in Equation \ref{tauphirho}. In the following lemma the structure of $G$ is described. This forces $Aut(\mathcal{W})$ to contain a dihedral subgroup of order $8$. This combined with Lemma \ref{autw1tame} gives interesting constrains to the structure of $Aut(\mathcal{W})$.

\begin{lemma} \label{dihedral}
The subgroup $G$ of $Aut(\mathcal{W})$ with $G=\langle \tau,\phi,\rho \rangle$ is isomorphic to the dihedral group $D_8$ of order 8.
\end{lemma}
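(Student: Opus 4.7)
The plan is to exhibit $G$ explicitly as a dihedral group on two distinguished involutions, using the fact that a group generated by two involutions $a,b$ is dihedral of order $2\,\mathrm{ord}(ab)$.

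First I would simplify the generating set. Observe that $\phi$ and $\tau$ are given by diagonal matrices, so they commute, and a direct computation shows
\[
\rho\phi\rho^{-1}\colon [X:Y:Z]\mapsto \rho\phi[Y:X:Z]=\rho[Y:-X:Z]=[-X:Y:Z]=\tau[X:Y:Z].
\]
Hence $\tau=\rho\phi\rho^{-1}\in\langle\phi,\rho\rangle$, so $G=\langle\phi,\rho\rangle$ is generated by the two involutions $\phi$ and $\rho$.

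Next I would compute the order of the product $\phi\rho$. A direct calculation gives
\[
\phi\rho\colon [X:Y:Z]\mapsto\phi[Y:X:Z]=[Y:-X:Z],\qquad (\phi\rho)^{2}\colon [X:Y:Z]\mapsto[-X:-Y:Z],
\]
and $(\phi\rho)^{2}$ is clearly a nontrivial involution, so $\phi\rho$ has order exactly $4$. Since $G$ is generated by two involutions whose product has order $4$, the standard presentation
\[
\langle a,b\mid a^{2}=b^{2}=(ab)^{4}=1\rangle
\]
of the dihedral group $D_{8}$ applies, and there is a surjective homomorphism $D_{8}\twoheadrightarrow G$.

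To finish, I would verify that this surjection is an isomorphism by showing that the eight elements $1,\phi\rho,(\phi\rho)^{2},(\phi\rho)^{3},\rho,\rho(\phi\rho),\rho(\phi\rho)^{2},\rho(\phi\rho)^{3}$ act as pairwise distinct linear transformations of $\mathbb{P}^{2}(\mathbb{K})$; this is immediate by writing down the diagonal/permutation matrix form of each of them. There are no real obstacles here: since $\mathrm{char}(\mathbb{K})=p\ge 7$, in particular $\mathrm{char}(\mathbb{K})\neq 2$, the eight sign/coordinate-swap patterns are genuinely distinct projective maps, so $|G|=8$ and $G\cong D_{8}$ as claimed.
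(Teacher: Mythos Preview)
Your argument is correct and essentially follows the paper's own proof: both hinge on the conjugation relation $\rho\phi\rho=\tau$ (which is exactly what the paper computes) and then identify the resulting order-$8$ group as $D_8$. The only cosmetic difference is in the final identification step: the paper observes that $H=\langle\tau,\phi\rangle$ is a normal Klein four-group and then counts involutions to rule out $Q_8$, whereas you reduce to the two generators $\phi,\rho$ and invoke the standard dihedral presentation $\langle a,b\mid a^2=b^2=(ab)^4=1\rangle$ directly.
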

\begin{proof}
Since $\tau$ and $\phi$ commute, the group $H=\langle \tau, \phi \rangle$ is elementary abelian of order $4$. Also, $\rho$ normalizes $H$ as $\rho \phi \rho=\tau$. This shows that $|G|=8$, $G$ is not abelian and since $G$ contains at least four distincts involution ($\phi , \tau , \rho$ and $\phi \circ \tau $) it is necessarily isomorphic to the dihedral group $D_8$ of order $8$.
\end{proof}

From Lemma \ref{autw1tame}, if $p \geq 7$ there are just three possibilities for $|Aut(\mathcal{W})|$. In the following a case-by-case analysis is considered to obtain the main result of this section.
\begin{itemize}
\item \textbf{Case 1: $|Aut(\mathcal{W})| \in \lbrace 240, 360 \rbrace$}. Our aim is to prove that this case cannot occur. 
From Lemma \ref{mbt}, since $200=40(g-1)<|\aut(\mathcal{W})| < 84(g-1)$, we have that $Aut(\mathcal{W})=48(g-1)=240$, hence the case $|\aut(\mathcal{W})|=360$ cannot occur. 
Also, if $|\Aut(\mathcal{W})|=240$,  Lemma \ref{mbt} implies that $Aut(\mathcal{W})$ has a short orbit $O_3$ of length $|Aut(\mathcal{W}_1)|/8=30$.

 In particular, for $P \in O_3$, the stabilizer $Aut(\mathcal{W})_P$ has order $8$. From \cite[Lemma 11.44]{HKT} $Aut(\mathcal{W})_P$ is a cyclic group of order $8$. A Sylow $2$-subgroup of $Aut(\mathcal{W})$ has order $16$, contains a cyclic group of order $8$ and also a dihedral group of order $8$. By direct checking with MAGMA there are just $2$ groups of order $16$ containing both a cyclic group of order $8$ and a dihedral group of order $8$, namely $G \cong SmallGroup(16,i)$ with $i=7,8$. Again by direct checking with MAGMA, there are no groups of order $240$ whose Sylow $2$-subgroup is isomorphic to $SmallGroup(16,i)$ with $i=7,8$, and hence this case can be excluded.
This proves that $|Aut(\mathcal{W})|=120$.
\item \textbf{Case 2: $|Aut(\mathcal{W})|=120$}. From \cite[Theorem 11.79]{HKT} an abelian subgroup of $Aut(\mathcal{W})$ has order at most equal to $4g+4=28$. Using this information, one can check with MAGMA that the only groups of order $120$ with a structure which is compatible with the above condition are $SmallGroup(120,34)$, $SmallGroup(120,37)$ and $SmallGroup(120,38)$. 
The cases $Aut(\mathcal{W}) \cong SmallGroup(120,37)$ and $Aut(\mathcal{W}) \cong SmallGroup(120,37)$ are incompatible with Lemma \ref{gkaut} as they both have a minimal normal subgroup of order $4$ but a Sylow $2$-subgroup of $Aut(\mathcal{W})$ is isomorphic to the dihedral group $D_8$ which is  not a direct product of a cyclic group and a group of order 2. Finally we note that $SmallGroup(120,34)$ is isomorphic to  the symmetric group $S_5$.
\end{itemize}
The main result of this section is now proved combining the above lemmas with Remark \ref{ppiccoli}. It provides a positive characteristic analogue of a result of Wiman \cite{wiman} dealing with the structure of $Aut(\mathcal{W})$ over $\mathbb{C}$.


\begin{theorem} \label{mainaut}
Let $\mathcal{W}$ denote the Wiman's sextic defined as in Equation \eqref{eqWiman1} over an algebraically closed field $\mathbb{K}$ of characteristic $p$. If $\mathbb{K}=\mathbb{C}$ or $p \geq 7$ then $Aut(\mathcal{W})$ is isomorphic to the symmetric group $S_5$. If $p=5$ then $\mathcal{W}$ is rational and $Aut(\mathcal{W}) \cong \PGL(2,\mathbb{K})$. If $p=2$ or $p=3$ then the homogeneous polynomial defining $\mathcal{W}$ as in \eqref{eqWiman1} is not irreducible.
\end{theorem}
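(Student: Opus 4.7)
The plan is to assemble the lemmas already developed in the section, handling each characteristic separately. For $p \in \{2,3,5\}$, Remark \ref{ppiccoli} is decisive: the defining polynomial $F(X,Y,Z)$ factors over $\mathbb{F}_2$ and $\mathbb{F}_3$, while in characteristic $5$ the curve $\cW$ is rational, so $\Aut(\cW)\cong \PGL(2,\mathbb{K})$ by \cite[Theorem 11.14]{HKT}. The case $\mathbb{K}=\mathbb{C}$ is Wiman's original result \cite{wiman} and needs no further argument.

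For $p\geq 7$, I would first record that the explicit maps $\phi,\tau,\rho$ from \eqref{tauphirho}, together with $\gamma$ from \eqref{gamma} and $\alpha$ from \eqref{alpha}, generate a subgroup of $\Aut(\cW)$ whose order is divisible by $\mathrm{lcm}(8,3,5)=120$, since $\langle\phi,\tau,\rho\rangle\cong D_8$ by Lemma \ref{dihedral}. Lemma \ref{autw1tame} then shows that $\Aut(\cW)$ is tame and bounded by the classical Hurwitz bound $84(g-1)=420$, so only $|\Aut(\cW)|\in\{120,240,360\}$ survive.

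The main obstacle is to exclude the orders $240$ and $360$ and then identify the remaining group of order $120$. Since $40(g-1)=200<|\Aut(\cW)|\leq 84(g-1)$ holds in both cases, Lemma \ref{mbt} applies and forces the order into $\{48(g-1),84(g-1)\}=\{240,420\}$, which already excludes $360$. To rule out $240$, I would apply Lemma \ref{mbt}(2) to produce a short orbit of length $30$ whose point-stabilizer is cyclic of order $8$ by \cite[Lemma 11.44]{HKT}; combined with the $D_8$ from Lemma \ref{dihedral}, any Sylow $2$-subgroup of $\Aut(\cW)$ (which has order $16$) must then contain both $C_8$ and $D_8$. A small-group inspection identifies the only two abstract candidates as $\mathrm{SmallGroup}(16,7)$ and $\mathrm{SmallGroup}(16,8)$, and a further check confirms that no group of order $240$ admits a Sylow $2$-subgroup of either type.

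Finally, for $|\Aut(\cW)|=120$, the abelian subgroup bound \cite[Theorem 11.79]{HKT} yields $|A|\leq 4g+4=28$ for every abelian $A\leq \Aut(\cW)$, and a small-group enumeration leaves only $\mathrm{SmallGroup}(120,i)$ with $i\in\{34,37,38\}$. The groups with $i=37,38$ admit a minimal normal subgroup of order $4$, so Lemma \ref{gkaut} would force either $\Aut(\cW)$ itself or a normal subgroup of index $3$ to have a Sylow $2$-subgroup of the form $C_{2^k}\times C_2$; since in both scenarios the relevant Sylow $2$-subgroup has order $8$ and must contain the non-abelian $D_8$ of Lemma \ref{dihedral}, this is impossible. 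Hence $\Aut(\cW)\cong \mathrm{SmallGroup}(120,34)\cong S_5$, completing the theorem.
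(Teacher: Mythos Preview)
Your proposal is correct and follows essentially the same approach as the paper: Remark \ref{ppiccoli} handles $p\le 5$, Wiman covers $\mathbb{C}$, and for $p\ge 7$ you combine Lemma \ref{autw1tame} (tameness, $|\Aut(\cW)|\in\{120,240,360\}$), Lemma \ref{mbt} (eliminating $360$ and, via the cyclic stabilizer of order $8$ together with Lemma \ref{dihedral}, eliminating $240$), and then the abelian bound plus Lemma \ref{gkaut} to pin down $S_5$ among the three candidate groups of order $120$. The only cosmetic point is that in the last step the Sylow $2$-subgroup \emph{is} $D_8$ (not merely contains it), since $8$ is the full $2$-part of $120$; this is exactly how the paper phrases the contradiction with Lemma \ref{gkaut}.
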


\section{The $\mathbb{F}_{19^2}$-maximal Wiman Sextic $\mathcal{W}$ is  not Galois covered by the Hermitian curve $\cH_{19}$ over $\mathbb{F}_{19^2}$} \label{nonga}

 In this section we show that the $\mathbb{F}_{19^2}$-maximal Wiman's sextic $\mathcal{W}$ is not Galois covered by the Hermitian curve $\cH_{19}$ over $\mathbb{F}_{19^2}$. 

The proof relies on the results of \cite{MZ} and the main tools are the classification of automorphisms of the Hermitian curve based on their orders and geometrical properties, as stated in Lemma \ref{classificazione} and Theorem \ref{caratteri}, as well as the Riemann-Huwritz and Hilbert's formulas \eqref{RHformula} and \eqref{contributo}.

Assume by contradiction that $\mathcal{W}$ is Galois covered by $\cH_{19}$ over $\mathbb{F}_{19^2}$ and let $G \leq \PGU(3,19)$ denote the corresponding Galois group. Then
\begin{equation} \label{bounds}
\frac{|\cH_{19}|}{|\mathcal{W}(\mathbb{F}_{19^2})|} \leq |G| \leq \frac{2g(\cH_{19})-2}{2g(\mathcal{W})-2},
\end{equation}
see the proof of Theorem 5 in \cite{GK}.
Hence
$$11<\frac{19^3+1}{590} \leq |G| \leq \frac{2g(\cH_{19})-2}{10}=34.$$
Moreover, since $G<\PGU(3,19)$, we have that $|G| \in \{12,\ldots,34\}$ divides $|\PGU(3,19)|$. This implies that  

\begin{equation} \label{ordini}
|G| \in \lbrace 12, 14,15,16,18,19,20,21,24,25,28,30,32 \rbrace.
\end{equation}

At this point a contradiction to $\mathcal{W}$ being Galois covered by $\cH_{19}$ over $\mathbb{F}_{19^2}$, is obtained with a case-by-case analysis with respect to $|G|$ as in \eqref{ordini}.
\begin{itemize}

\item \textbf{Case} $|G|=12$. By Sylow's Theorems $G$ contains at least two elements of order $3$. Then by Theorem \ref{caratteri}, $\Delta=i\cdot 2+k\cdot 20 $ with $i \geq 2$ and $k\leq 9$. This contradicts \eqref{RHformula}.

\item \textbf{Case} $|G|=14$. $G$ is either isomorphic to $C_{14}$ or to $D_{14}$. Since by Lemma \ref{classificazione} and Theorem \ref{caratteri}, $G$ cannot contain elements of order $14$, we have that $G \cong D_{14}$. Then by Theorem \ref{caratteri}, $\Delta=7\cdot 20+6\cdot 3=158$, contradicting \eqref{RHformula}.

\item \textbf{Case} $|G|=15$. By Sylow's Theorems $G$ is isomorphic to $C_{15}$. By Theorem \ref{caratteri}, $\Delta = 2\cdot 2+4\cdot i+8 \cdot 2$, with $i=0,20$ as a generator of the Sylow $5$-subgroup of $G$ is either of type (B1) or of type (A) in Lemma \ref{classificazione}. The Riemann-Hurwitz formula \eqref{RHformula} implies that  $\Delta=190$, a contradiction.

\item \textbf{Case} $|G|=18$. From Theorem \ref{caratteri} and Equation \eqref{RHformula}, $\Delta=20 \cdot i+2 \cdot (17-i)=160$, where $i$ is the number of involutions of $G$. So $i=7$. By Sylow's Theorems a group of order $18$ cannot contain exactly $7$ involutions, a contradiction.

\item \textbf{Case} $|G|=19$. Up to isomorphism $G$ is $C_{19}$. Since a power of an elation is an elation, by Theorem \ref{caratteri} we obtain that $\Delta=18i$ with $i=21$ or $i=2$. This contradict Equation \eqref{RHformula}.

\item \textbf{Case} $|G|=21$. By Sylow's Theorems and Schur-Zassenhaus Theorem \cite[Corollary 7.5]{machì}, $G$ is isomorphic either to $C_{21}$ or to the semidirect product $C_7 \rtimes C_3$. The former case is not possible because it contradicts Theorem \ref{caratteri}; the latter, again using Theorem \ref{caratteri}, contradicts 
Equation \eqref{RHformula}.

\item \textbf{Case} $|G|=25$. $G$ is isomorphic either to $C_{25}$ or to $C_5 \times C_5$. 
 The former case contradicts Theorem \ref{caratteri}.
 In the latter case, since every non-trivial element in $G$ has order $5$, $G$ contains only elements either of type (A) or of type (B1) from Lemma \ref{classificazione}. Since powers of a homology are homologies themselves, $\Delta$ must be divisible by $4$, contradicting Equation \eqref{RHformula}.

\item \textbf{Case} $|G|=28$. Since $G$ cannot contain elements of order $14$ from Theorem \ref{caratteri} and $G$ has exactly one Sylow $7$-subgroup, we have that $\Delta=i\cdot 20 + 6\cdot 3$ for some non-negative integer $i$. This contradicts Equation \eqref{RHformula}.

\item \textbf{Case} $|G|=30$. Since there are at least two elements of type (B3) in $G$ and $\Delta =40$ from Equation \eqref{RHformula}, we conclude that $G$ has a unique Sylow $2$-subgroup. This implies that $G \simeq C_{30}$ and, by Theorem \ref{caratteri}, $\Delta =1\cdot 20+2\cdot 3+4\cdot
 i+2\cdot 2+4\cdot k+8\cdot 2+8\cdot 2 \geq 62$ where $i$ is either equal to $20$ or to $0$ as the elements of order $5$ are either homologies or of type (B1), while $k$ is either equal to $20$ or to $0$ as the elements of order $10$ in $\PGU(3,19)$ are either homologies or of type (B1) from Lemma \ref{classificazione}. Combining the above arguments on $\Delta$ we deduce that this case cannot occur.
\end{itemize}

At this point, to conclude the proof of the main result of this section, we need to examine the following cases: $|G| \in \lbrace 16,20,24,32\rbrace$.
The arithmetical arguments used  for the previous cases are not sufficient, and hence normalizers of subgroups of $\PGU(3,19)$ need to be considered.

Assume that $\mathcal{W} \cong \cH_{19}/G$, and let $N$ be the normalizer $N$ of $G$ in $\PGU(3,19)$ and  $Q$ be the factor group $N/G$. From Galois theory $Q$ is a subgroup of $Aut(\mathcal{W}) \cong {\rm S}_5$. Since subgroups of the symmetric group $S_5$ can be completely listed, a contradiction can be obtained proving that the structure of $Q$ is not compatible with the subgroup structure of ${\rm S}_5$.

\begin{itemize}

\item \textbf{Case} $|G|=16$. We can use MAGMA to obtain the complete list, up to conjugation, of subgroups of $\PGU(3,19)$ of order $16$. Defining in MAGMA $S:=SubgroupLattice(\PGU(3,19): \ Properties:=true);$
we get that the subgroups $G$ of $\PGU(3,q)$ of order $16$ are: $G=SubgroupLattice(\PGU(3,19))[i]$ with $i \in \lbrace 70,71,72 \rbrace$. Considering the normalizer $N$ of $G$ in $\PGU(3,19)$ we get that $|Q|\in \lbrace 150,30,10 \rbrace$. Since $S_5$ has neither subgroups of order $150$ nor of order $30$, we get that the unique admissible case is $|Q|=10$. In this case $Q$ is cyclic but in $S_5$ each subgroup of order $10$ is dhiedral. We deduce that this case cannot occur.

\item \textbf{Case} $|G|=20$. As before, using MAGMA, we obtain the complete list of subgroups of order $20$ of $\PGU(3,19)$, namely $G=S[i]$ with $ i \in \lbrace 40, \dots ,51 \rbrace$. In these cases, $|Q| \in \lbrace 6840,40,20 \rbrace$. Clearly $S_5$ has no subgroups of orders $6840$ or $40$. Hence $|Q|=20$.  In this case $Q$ abelian. Since $S_5$ has no abelian subgroups of order $20$ we have a contradiction.

\item \textbf{Case} $|G|=24$. Using MAGMA, we obtain the complete list of subgroups of order $24$ in $\PGU(3,19)$: $G=S[i]$ with $ i \in \lbrace 73,\dots , 77 \rbrace$. In these cases, $|Q| \in \lbrace 30,20,10 \rbrace$. We note that $S_5$ contains no subgroups of orders $30$ while if $|Q|=20$ or $|Q|=10$ then $Q$ is cyclic. Since $S_5$ has no cyclic subgroups of order $20$ or $10$ we have a contradiction.

\item \textbf{Case} $|G|=32$. Arguing as before, we get that $G$ is isomorphic to $S[118]$. A contradiction is obtained combining Equation \eqref{RHformula} with the fact that $G$ contains seven involutions.
\end{itemize}

The main result of this section is now proved.
\begin{theorem} \label{galoisw1}
The $\mathbb{F}_{19}$-maximal Wiman's sextic $\mathcal{W}$ is not Galois covered by the Hermitian curve $\cH_{19}$ over $\mathbb{F}_{19^2}$.
\end{theorem}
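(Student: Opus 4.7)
The plan is to argue by contradiction: assume $\mathcal{W}\cong \cH_{19}/G$ for some $G\le \PGU(3,19)$, confine $|G|$ to an explicit finite list, and then rule out each admissible order in turn.

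First I would extract the bounds on $|G|$. The Kleiman--Serre covering result forces every $G$-orbit on $\cH_{19}(\mathbb{F}_{19^2})$ to descend to an $\mathbb{F}_{19^2}$-rational point of $\mathcal{W}$, giving $|G|\ge |\cH_{19}(\mathbb{F}_{19^2})|/|\mathcal{W}(\mathbb{F}_{19^2})|=6860/590>11$, while the Riemann--Hurwitz formula applied to $\cH_{19}\to \cH_{19}/G$ yields $|G|\le (2g(\cH_{19})-2)/(2g(\mathcal{W})-2)=340/10=34$. Combined with the divisibility $|G|\mid |\PGU(3,19)|=2^5\cdot 3^2\cdot 5^2\cdot 7^3\cdot 19^3$, this leaves only the thirteen orders $\{12,14,15,16,18,19,20,21,24,25,28,30,32\}$.

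Next I would eliminate as many of these orders as possible by arithmetic. Riemann--Hurwitz fixes the degree of the different as $\Delta=340-10|G|$, and Hilbert's formula \eqref{contributo} gives $\Delta=\sum_{\sigma\ne \mathrm{id}}i(\sigma)$. For $q=19$ one has $q+1=20$, $q^2-1=360$, $q^2-q+1=343$, so Theorem~\ref{caratteri} restricts $i(\sigma)$ to $\{0,2,3,20,21\}$, each value being determined by the order and geometric type of $\sigma$. For each admissible order I would enumerate the possible isomorphism types of $G$ (using Sylow theory or, where needed, Schur--Zassenhaus), attach to each element the prescribed value of $i(\sigma)$, and compare the resulting maximum and minimum of $\Delta$ with $340-10|G|$. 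I expect the odd orders and the even orders $12,14,18,28,30$ to be knocked out by direct numerical incompatibility.

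For the surviving orders $|G|\in\{16,20,24,32\}$, arithmetic alone is not sharp enough, and I would pivot to a structural argument. Let $N=N_{\PGU(3,19)}(G)$: the conjugation action of $N$ on $\cH_{19}$ descends to $\mathcal{W}$, inducing an embedding $Q:=N/G\hookrightarrow \Aut(\mathcal{W})\cong S_5$ via Theorem~\ref{mainaut}. For each conjugacy class of subgroups of $\PGU(3,19)$ of the relevant order --- enumerated via the MAGMA \texttt{SubgroupLattice} function --- I would compute $|N|$, read off $|Q|$, and check that either $|Q|\nmid 120$, or $Q$ has an abstract feature (cyclicity, abelianness, Sylow shape) incompatible with every subgroup of $S_5$ of that order. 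The main obstacle is precisely this last step: for the four surviving orders the equation $\Delta=340-10|G|$ can in principle be realized by some valid decomposition in Theorem~\ref{caratteri}, so the argument must genuinely use the external constraint that $N/G$ embeds in $S_5$; moreover one cannot work merely up to abstract isomorphism of $G$ because different $\PGU(3,19)$-conjugacy classes of the same group admit very different normalizers, which is why the explicit MAGMA enumeration, combined with the very restrictive subgroup lattice of $S_5$, is indispensable.
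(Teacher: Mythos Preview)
Your proposal is correct and follows essentially the same route as the paper: the same upper and lower bounds on $|G|$, the same thirteen admissible orders, arithmetic elimination via Riemann--Hurwitz and Theorem~\ref{caratteri} for the nine orders you list, and the normalizer argument $N/G\hookrightarrow S_5$ together with a MAGMA enumeration for the remaining four. The only tactical wrinkle is that for $|G|=32$ the paper, after using MAGMA to pin down the unique conjugacy class, finishes by counting involutions and applying \eqref{RHformula} rather than by the normalizer test---but this is a minor variation within the same overall strategy.
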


\section*{Acknowledgments}

The authors would like to thank the Italian Ministry MIUR, Strutture Geometriche, Combinatoria e loro 
Applicazioni, Prin 2012 prot.~2012XZE22K and GNSAGA of the Italian INDAM.

This resarch was carried out within the project
``Progetto {\em Geometrie di Galois, Curve Algebriche su campi finiti e loro Applicazioni}'', supported by Fondo Ricerca di Base, 2015, of Universit\`a degli Studi di Perugia.

\vspace{1ex}
\noindent
Massimo Giulietti

\vspace{.5ex}
\noindent
Universit\'a degli Studi di Perugia,\\
Dipartimento di Matematica e Informatica,\\
Via Vanvitelli 1,\\
06123 Perugia,\\
Italy,\\
\emph{massimo.giulietti@unipg.it}

\vspace{1ex}
\noindent
Motoko Kawakita

\vspace{.5ex}
\noindent
Shiga University of Medical Science,\\
Seta Tsukinowa-cho\\
Otsu city, Shiga 520-2192\\
Japan,\\
\emph{kawakita@belle.shiga-med.ac.jp}

\vspace{1ex}
\noindent
Stefano Lia

\vspace{.5ex}
\noindent
\emph{stefano.lia.7@gmail.com}

\vspace{1ex}
\noindent
Maria Montanucci

\vspace{.5ex}
\noindent
Universit\'a degli Studi della Basilicata,\\
Dipartimento di Matematica, Informatica ed Economia,\\
Campus di Macchia Romana,\\
Viale dell' Ateneo Lucano 10,\\
85100 Potenza,\\
Italy,\\
\emph{maria.montanucci@unibas.it}

\end{document}